\documentclass[10pt,a4paper]{article}
\usepackage[utf8]{inputenc}
\usepackage[dvipsnames]{xcolor}
\usepackage{graphicx}
\usepackage{hyperref}
\usepackage{mathtools}
\usepackage{amsmath}
\usepackage{amsthm}
\usepackage{commath}
\usepackage{mathabx}
\usepackage{tikz}
\usepackage[labelformat = simple]{subcaption}
\usepackage{hyperref}
\usepackage{xspace}
\usepackage{comment}
\usepackage[hmargin=2.5cm,vmargin=3.8cm]{geometry}
\usepackage[textsize=footnotesize,color=green!40]{todonotes}


\newtheorem{theorem}{Theorem}[section]

\newtheorem{corollary}[theorem]{Corollary}

\newtheorem{lemma}[theorem]{Lemma}
\newtheorem{proposition}[theorem]{Proposition}

\newtheorem{definition}[theorem]{Definition}

\DeclarePairedDelimiter\floor{\lfloor}{\rfloor}
\DeclareMathOperator{\fes}{fes}
\DeclareMathOperator{\MEG}{meg}
\DeclareMathOperator{\MLN}{mln}

\newcommand{\decisionpb}[4]{
\begin{center}
        \noindent\framebox{\begin{minipage}{#4\textwidth}
                #1\\
                Instance: #2\\ 
                Question: #3
        \end{minipage}}
\end{center}
}

\title{Monitoring edge-geodetic sets in graphs}

\author{
Subhadeep R. Dev\footnote{Indian Statistical Institute, Kolkata, India}
\and Sanjana Dey\footnote{National University of Singapore, Singapore}~\footnote{\noindent UMONS -- Université de Mons, Mons, Belgium}
\and Florent Foucaud\footnote{Université Clermont Auvergne, CNRS, Clermont Auvergne INP, Mines Saint-Etienne, LIMOS, 63000 Clermont-Ferrand, France.} \and Krishna Narayanan\footnote{\noindent Department of Mathematics, Simon Fraser University, Burnaby, Canada}~\footnote{\noindent PSG College of Technology, Coimbatore, India}
\and Lekshmi Ramasubramony Sulochana \footnotemark[6]}

\begin{document}
\maketitle

\begin{abstract}
We introduce a new graph-theoretic concept in the area of network monitoring. In this area, one wishes to monitor the vertices and/or the edges of a network (viewed as a graph) in order to detect and prevent failures. Inspired by two notions studied in the literature (edge-geodetic sets and distance-edge-monitoring sets), we define the notion of a monitoring edge-geodetic set (MEG-set for short) of a graph $G$ as an edge-geodetic set $S\subseteq V(G)$ of $G$ (that is, every edge of $G$ lies on some shortest path between two vertices of $S$) with the additional property that for every edge $e$ of $G$, there is a vertex pair $x, y$ of $S$ such that $e$ lies on \emph{all} shortest paths between $x$ and $y$. The motivation is that, if some edge $e$ is removed from the network (for example if it ceases to function), the monitoring probes $x$ and $y$ will detect the failure since the distance between them will increase.

We explore the notion of MEG-sets by deriving the minimum size of a MEG-set for some basic graph classes (trees, cycles, unicyclic graphs, complete graphs, grids, hypercubes, corona products...) and we prove an upper bound using the feedback edge set of the graph.

We also show that determining the smallest size of an MEG-set of a graph is NP-hard, even for graphs of maximum degree at most~9.
\end{abstract}

\section{Introduction}
We introduce a new graph-theoretic concept, that is motivated by the problem of network monitoring, called \emph{monitoring edge-geodetic sets}. In the area of network monitoring, one wishes to detect or repair faults in a network; in many applications, the monitoring process involves distance probes~\cite{r2,BEEHHMR06,BR06,r1j}. Our networks are modeled by finite, undirected simple connected graphs, whose vertices represent systems and whose edges represent the connections between them. We wish to monitor a network such that when a connection (an edge) fails, we can detect the said failure by means of certain probes. To do this, we select a small subset of vertices (representing the probes) of the network such that all connections are covered by the shortest paths between pairs of vertices in the network. Moreover, any two probes are able to detect the current distance that separates them. The goal is that, when an edge of the network fails, some pair of probes detects a change in their distance value, and therefore the failure can be detected. Our inspiration comes from two areas: the concept of \emph{geodetic sets} in graphs and its variants~\cite{r4}, and the concept of \emph{distance edge-monitoring sets}~\cite{r1,r1j}.

We now proceed with some necessary definitions. A \emph{geodesic} is a shortest path between two vertices $u,v$ of a graph $G$~\cite{b2}. The \emph{length} of a path is the number of its edges, and the length of a geodesic between two vertices $u,v$ in $G$ is the \emph{distance} $d_G(u,v)$ between them. For an edge $e$ of $G$, we denote by $G-e$ the graph obtained by deleting $e$ from $G$. An edge $e$ in a graph $G$ is a \emph{bridge} if $G-e$ has more connected components than $G$. 
The \emph{open neighborhood} of a vertex $v \in V(G)$ is $N_G(v) = \{u \in V \, | \, uv \in E(G)\}$ and its \emph{closed neighborhood} is the set $N_G[v] = N_G(v) \cup \{v\}$.

\paragraph{Monitoring edge-geodetic sets.}
We now formally define our main concept. 

\begin{definition}
Two vertices $x,y$ \emph{monitor} an edge $e$ in graph $G$ if $e$ belongs to all shortest paths between $x$ and $y$. A set $S$ of vertices of $G$ is called a \emph{monitoring edge-geodetic set} of $G$ (\emph{MEG-set} for short) if, for every edge $e$ of $G$, there is a pair $x,y$ of vertices of $S$ that monitors $e$.
\end{definition}

We denote by $\MEG(G)$ the size of a smallest MEG-set of $G$. We note that $V(G)$ is always an MEG-set of $G$, thus $\MEG(G)$ is always well-defined.

\paragraph{Related notions.}
A set $S$ of vertices of a graph $G$ is a \emph{geodetic set} if every vertex of $G$ lies on some shortest path between two vertices of $S$~\cite{r4}. An \emph{edge-geodetic set} of $G$ is a set $S \subseteq V(G)$ such that every edge of $G$ is contained in a geodesic joining some pair of vertices in $S$~\cite{r5}. A \emph{strong edge-geodetic set} of $G$ is a set $S$ of vertices of $G$ such that for each pair $u,v$ of vertices of $S$, one can select a shortest $u-v$ path, in a way that the union of all these $\binom{|S|}{2}$ paths contains $E(G)$~\cite{SEGS}. It follows from these definitions that any strong edge-geodetic set is an edge-geodetic set, and any edge-geodetic set is a geodetic set (if the graph has no isolated vertices). In fact, every MEG-set is a strong edge-geodetic set. Indeed, given an MEG-set $S$, one can choose any shortest path between each pair of vertices of $S$, and the set of these paths covers $E(G)$. Indeed, every edge of $G$ is contained in \emph{all} shortest paths between some pair of $S$. Hence, MEG-sets can be seen as an especially strong form of strong edge-geodetic sets.

A set $S$ of vertices of a graph $G$ is a \emph{distance-edge monitoring} set if, for every edge $e$, there is a vertex $x$ of $S$ and a vertex $y$ of $G$ such that $e$ lies on all shortest paths between $x$ and $y$~\cite{r1,r1j}. Thus, it follows immediately that any MEG-set of a graph $G$ is also a distance-edge monitoring set of $G$.

\paragraph{Our results.} 
 We start by presenting some basic lemmas about the concept of MEG-sets in Section~\ref{sec:prelim}, that are helpful for understanding this concept. We then study in Section~\ref{sec:classes} the optimal value of $\MEG(G)$ when $G$ is a tree, cycle, unicyclic graph, complete (multipartite) graph, hypercubes, grids and corona products. In Section~\ref{sec:fes}, we show that $\MEG(G)$ is bounded above by a linear function of the \emph{feedback edge set number} of $G$ (the smallest number of edges of $G$ needed to cover all cycles of $G$, also called \emph{cyclomatic number}) and the number of leaves of $G$. This implies that $\MEG(G)$ is bounded above by a function of the \emph{max leaf number} of $G$ (the maximum number of leaves in a spanning tree of $G$). These two parameters are popular in structural graph theory and in the design of algorithms. We refer to Figure~\ref{fig:diagram} for the relations between parameter $\MEG$ and other graph parameters. We show  that determining $\MEG(G)$ is an NP-complete problem, even in graphs of maximum degree at most~9, in Section~\ref{sec:NPc}. Finally, we conclude in Section~\ref{sec:conclu}.

\begin{figure}[ht!]
\centering
\tikzstyle{mybox}=[line width=0.5mm,rectangle, minimum height=.8cm,fill=white!70,rounded corners=1mm,draw]
\tikzstyle{myedge}=[line width=0.5mm,<-]
\newcommand{\tworows}[2]{\begin{tabular}{c}{#1}\\[-1mm]{#2}\end{tabular}}
\scalebox{0.8}{\begin{tikzpicture}[node distance=10mm]
 		\node[mybox] (vc)  {Vertex Cover Number};
 		 \node[mybox] (mln) [above right = of vc,xshift=15mm,yshift=-2mm] {Max Leaf Number};
  		\node[mybox] (fes) [right = of vc] {\tworows{Feedback Edge Set}{Number}}  edge[myedge] (mln);
  		 \node[mybox] (meg) [right = of fes,fill=black!15] {$\MEG$} edge[myedge] (mln);
 		\node[mybox] (fvs) [below = of vc, yshift=5mm] {\tworows{Feedback Vertex Set}{Number}} edge[myedge] (vc) edge[myedge] (fes);
 		\node[mybox] (dem) [right = of fvs] {\tworows{Distance Edge-Monitoring}{Number}} edge[myedge] (fes) edge[myedge] (vc) edge[myedge] (meg);
 		 \node[mybox] (segs) [right = of dem] {\tworows{Strong Edge-Geodetic}{Set Number}} edge[myedge] (meg);
 		  \node[mybox] (egs) [below = of segs, yshift=5mm] {\tworows{Edge-Geodetic}{Set Number}} edge[myedge] (segs);
 		  \node[mybox] (gs) [below = of egs, yshift=5mm] {\tworows{Geodetic Set}{Number}} edge[myedge] (egs);
 		\node[mybox] (arb) [below = of fvs,xshift=20mm, yshift=5mm] {Arboricity} edge[myedge] (fvs) edge[myedge] (dem);

\end{tikzpicture}}
\caption{Relations between the parameter $\MEG$ and other structural parameters in graphs (with no isolated vertices). For the relationships of distance edge-monitoring sets, see~\cite{r1,r1j}. Arcs between parameters indicate that the value of the bottom parameter is upper-bounded by a function of the top parameter.}
\label{fig:diagram}
\end{figure}

\paragraph{Further related work on MEG-sets.} This paper is the full version of a paper presented at the CALDAM 2023 conference~\cite{CALDAM}, where the notion of MEG-sets was presented for the first time. It contains the full proofs of the results in the conference version (with a corrected version of Theorem~\ref{uct}), as well as the new result on corona products, and the new NP-completeness proof. In the meantime, Haslegrave~\cite{H23} proved that the MEG-set problem is NP-complete on general graphs, and also studied MEG-sets for other graph products.

\section{Preliminary lemmas}\label{sec:prelim}

We now give some useful lemmas about the basic properties of MEG-sets.

A vertex is \emph{simplicial} if its neighborhood forms a clique.

\begin{lemma}\label{simplicial}\label{leaf-node}
In a graph $G$ with at least one edge, any simplicial vertex (in particular, any leaf) belongs to any edge-geodetic set and thus, to any MEG-set of $G$.
\end{lemma}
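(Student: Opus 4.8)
The plan is to fix a simplicial vertex $v$, single out one edge incident to it, and show that this edge can only be covered by a geodesic having $v$ as an endpoint; this will force $v$ into every edge-geodetic set, and the MEG-set statement will then follow for free.

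First I would use the standing assumption that $G$ is connected with at least one edge to conclude that $v$ is not isolated, so I may pick a neighbor $u$ of $v$ and focus on the edge $e = vu$. The crux is the following claim: every shortest path of $G$ that contains $e$ has $v$ as one of its two endpoints. To prove it, suppose some geodesic $P$ from $x$ to $y$ contains $e$ but does not end at $v$. Then $v$ is an internal vertex of $P$, so $P$ traverses two distinct edges $wv$ and $vu$ at $v$, with $w,u \in N_G(v)$ and $w \neq u$. Since $v$ is simplicial, $N_G(v)$ is a clique, hence $wu \in E(G)$; replacing the subpath $w,v,u$ of $P$ by the single edge $wu$ produces a strictly shorter walk from $x$ to $y$, contradicting that $P$ is a geodesic. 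Therefore $v$ must be an endpoint of any geodesic through $e$.

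With the claim established, the conclusion is immediate. If $S$ is any edge-geodetic set, then by definition $e$ lies on some geodesic joining two vertices of $S$; by the claim one of those two vertices is $v$, so $v \in S$. Hence $v$ belongs to every edge-geodetic set. Since every MEG-set is in particular an edge-geodetic set (as noted in the excerpt, via the strong edge-geodetic property), $v$ also belongs to every MEG-set of $G$.

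I expect the only delicate point to be the shortcut argument underlying the claim: one must verify that an internal occurrence of $v$ on $P$ genuinely yields two \emph{distinct} neighbors of $v$ on $P$, so that the clique structure of $N_G(v)$ supplies the shortening edge $wu$. The endpoint-versus-internal case distinction for $v$ on $P$ is the one spot requiring care; once it is handled, everything else is a direct unwinding of the definition of an edge-geodetic set.
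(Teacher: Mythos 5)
Your proof is correct and takes essentially the same approach as the paper's: both rest on the key observation that a simplicial vertex $v$ cannot be an internal vertex of any geodesic, since two distinct neighbors of $v$ on such a path would be adjacent and yield a shortcut, so the edges incident to $v$ can only be covered by geodesics ending at $v$. Your write-up is in fact somewhat more careful than the paper's terse contradiction argument, and it properly proves the claim for edge-geodetic sets first (which is what the lemma asserts) before deducing the MEG-set case.
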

\begin{proof} Let us consider by contradiction an MEG-set of $G$ that does not contain said simplicial vertex $v$. Any shortest path passing through its neighbors will not pass through $v$, because all the neighbors are adjacent, hence leaving the edges incident to $v$ uncovered, a contradiction.
\end{proof}

Two distinct vertices $u$ and $v$ of a graph G are \emph{open twins} if $N(u) = N(v)$ and \emph{closed twins} if $N[u] = N[v]$. Further, $u$ and $v$ are \emph{twins} in $G$ if they are open twins or closed twins in $G$.

\begin{lemma}\label{twinv}
If two vertices are twins of degree at least~1 in a graph $G$, then they must belong to any MEG-set of $G$.
\end{lemma}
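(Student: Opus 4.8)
The plan is to exploit the fact that swapping the two twins is a graph automorphism, which forces every shortest path through one twin to have a ``mirror'' shortest path through the other; this makes it impossible to pin down an edge at a twin unless the twins themselves are used as probes. First I would record the two elementary facts that drive everything. Writing $\sigma$ for the transposition exchanging the twins $u,v$ and fixing all other vertices, one checks directly from $N(u)=N(v)$ (open case) or from $N[u]=N[v]$ (closed case) that $\sigma$ preserves adjacency, so it is an automorphism of $G$; in particular $\sigma$ preserves distances and sends shortest paths to shortest paths. I would also note the companion identity $d_G(u,z)=d_G(v,z)$ for every vertex $z\notin\{u,v\}$, which follows because the first edge of any $u$--$z$ geodesic can be rerouted through the common neighbourhood to give a $v$--$z$ walk of the same length, and symmetrically.

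Treat the open-twin case first, where $u$ and $v$ are non-adjacent. Suppose for contradiction that some MEG-set $S$ omits $u$; since $\deg(u)\ge 1$, pick an edge $uw$, and let $x,y\in S$ be a pair monitoring it, so $u\notin\{x,y\}$ and $u$ is an internal vertex of every shortest $x$--$y$ path. Taking one such path $P$, with the path-neighbours $p,q$ of $u$ lying in $N(u)=N(v)$, I would replace the single vertex $u$ by $v$ to obtain a walk of the same length that avoids the edge $uw$. The only thing to verify is that $v$ does not already lie on $P$; this is where a short shortcut argument is needed: if $P$ contained both $u$ and $v$, then the predecessor of $u$ and the successor of $v$ on $P$ would both be common neighbours, hence within distance $2$ through $v$, contradicting that the corresponding sub-path of the geodesic $P$ has length at least $4$. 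Having excluded this, the substituted path (equivalently $\sigma(P)$) is a shortest $x$--$y$ path avoiding $uw$, contradicting that $x,y$ monitor $uw$. Hence $u\in S$, and by the symmetry between $u$ and $v$ also $v\in S$.

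For the closed-twin case ($u,v$ adjacent with $N[u]=N[v]$) I would instead focus on the single edge $uv$ and show it can be monitored only by the pair $\{u,v\}$ itself. If a pair $x,y$ monitors $uv$ with $\{x,y\}\neq\{u,v\}$, then on a shortest $x$--$y$ path traversing $uv$ the vertex preceding $u$ (or the one following $v$) is a common neighbour of $u$ and $v$, hence adjacent to the other twin, and so gives a length-$2$ shortcut bypassing $uv$, contradicting minimality. The boundary cases where $u$ or $v$ is itself an endpoint are eliminated using $d(u,z)=d(v,z)$: a pair $x=u$, $y\neq v$ cannot force $uv$ onto all its geodesics, since $d(u,y)=d(v,y)$ means no geodesic from $u$ starts along $uv$. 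Thus the only monitoring pair for $uv$ is $\{u,v\}$, and both vertices must belong to $S$.

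The main obstacle I anticipate is the bookkeeping in the open-twin case: the substitution $u\mapsto v$ is legitimate only when $v$ is absent from $P$, so the crux is the shortcut argument showing that no geodesic between two vertices other than $u,v$ can pass through both twins. Everything else follows directly from $\sigma$ being an automorphism together with the distance identity $d(u,z)=d(v,z)$.
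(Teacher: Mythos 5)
Your proof takes essentially the same route as the paper's: for open twins, substitute one twin for the other on a shortest path (the paper phrases this as ``for any shortest path passing through $u$, there is another one passing through $v$''); for closed twins, observe that any geodesic containing the edge $uv$ must be the edge $uv$ itself, because a longer geodesic would admit a shortcut through the common neighbourhood. Your write-up is considerably more careful than the paper's terse argument, and your closed-twin case is complete.

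There is, however, one sub-case in the open-twin argument that your exclusion step does not cover. You assume only $u\notin\{x,y\}$, so the second twin $v$ may itself be one of the probes, i.e.\ an endpoint of the geodesic $P$. In that situation ``the successor of $v$ on $P$'' does not exist, and the shortcut argument as you state it (predecessor of $u$ and successor of $v$ are both common neighbours, forcing a subpath of length at least $4$ between vertices at distance at most $2$) cannot be run; note also that your summary of the crux --- no geodesic between two vertices \emph{other than} $u,v$ passes through both twins --- silently assumes both endpoints differ from $v$, which your setup does not guarantee. The hole is easy to plug with tools you already introduced: if $P$ runs from $x$ to $v$ and passes through $u$, then, since subpaths of geodesics are geodesics, $d_G(x,v)=d_G(x,u)+d_G(u,v)=d_G(x,u)+2$, contradicting your identity $d_G(x,u)=d_G(x,v)$. (Equivalently: the predecessor $p$ of $u$ on $P$ lies in $N(u)=N(v)$, so the length-$3$ subpath of $P$ from $p$ to $v$ contradicts $d_G(p,v)=1$.) With that one line added, your proof is complete and matches the paper's approach.
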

\begin{proof}
For any pair $u,v$ of open twins in $G$, for any shortest path passing through $u$, there is another one passing through $v$. Thus, if $u,v$ were not part of the MEG-set, then the edges incident to $u$ and $v$ would remain unmonitored, a contradiction.

If $u,v$ are closed twins, if some shortest path contains the edge $uv$, then it must be of length~1 and consist of the edge $uv$ itself (otherwise there would be a shortcut). Thus, to monitor $uv$, both $u,v$ must belong to any MEG-set.
\end{proof}

The next two lemmas concern cut-vertices and subgraphs, and will be useful in some of our proofs.

\begin{lemma}\label{cutv}
Let $G$ be a graph with a cut-vertex $v$ and $C_1,C_2,\ldots,C_k$ be the $k$ components obtained when removing $v$ from $G$. If $S_1, S_2,\ldots, S_k$ are MEG-sets of the induced subgraphs $G[C_1 \cup \{v\}], G[C_2 \cup \{v\}],\ldots,G[C_k \cup \{v\}]$, then $S = (S_1\cup S_2,\ldots,\cup S_k)\setminus\{v\}$ is an MEG-set of $G$.
\end{lemma}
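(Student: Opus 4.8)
The plan is to exploit two structural facts about the cut-vertex $v$ and then transfer the monitoring property from each subgraph $G[C_i \cup \{v\}]$ to $G$, one edge at a time.

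First I would record the key observation that geodesics respect this separation: for any two vertices $x,y \in C_i \cup \{v\}$, every shortest $x$--$y$ path in $G$ stays inside $C_i \cup \{v\}$, so that $d_G(x,y) = d_{G[C_i\cup\{v\}]}(x,y)$ and the two graphs have \emph{exactly the same} set of shortest $x$--$y$ paths. The reason is that $v$ separates $C_i$ from the other components, so a path using a vertex outside $C_i\cup\{v\}$ would have to enter and leave $C_i$ through $v$, thereby visiting $v$ twice, which a shortest (simple) path cannot do. I would also note that every edge $e$ of $G$ lies in exactly one subgraph $G[C_i \cup \{v\}]$: edges inside a component clearly do, and each edge incident to $v$ joins $v$ to a vertex of a unique component.

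Now fix an edge $e$, lying in $G[C_i\cup\{v\}]$ say. Since $S_i$ is an MEG-set of that subgraph, some pair $x,y \in S_i$ monitors $e$ there, i.e.\ $e$ lies on \emph{all} shortest $x$--$y$ paths of $G[C_i\cup\{v\}]$. If $x \ne v$ and $y \ne v$, then $x,y \in S$, and by the observation above the shortest $x$--$y$ paths in $G$ coincide with those in the subgraph, so $x,y$ still monitor $e$ in $G$. This is the easy case.

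The main obstacle is the remaining case, in which the monitoring pair in $S_i$ involves the cut-vertex itself, say the pair is $\{v,y\}$ with $y \in C_i$ (the two members of a pair are distinct, so at most one equals $v$). Here $v \notin S$, so I must produce a substitute pair avoiding $v$, and the idea is to reach across to another component. Since $v$ is a cut-vertex we have $k \ge 2$, and I would pick any $z \in S_j\setminus\{v\}$ for some $j \ne i$; such a $z$ exists because $G[C_j\cup\{v\}]$ contains an edge and hence its MEG-set $S_j$ has at least two vertices. Any shortest $z$--$y$ path of $G$ must pass through $v$ and therefore splits as a shortest $z$--$v$ path inside $C_j\cup\{v\}$ followed by a shortest $v$--$y$ path inside $C_i\cup\{v\}$; the latter portion is a shortest $v$--$y$ path of $G[C_i\cup\{v\}]$ and hence contains $e$ by the choice of $\{v,y\}$. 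Thus $e$ lies on every shortest $z$--$y$ path of $G$, so the pair $z,y \in S$ monitors $e$. Ranging over all edges $e$ shows that $S$ is an MEG-set of $G$, completing the argument.
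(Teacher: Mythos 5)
Your proof is correct and follows essentially the same route as the paper's: both split into the case where the monitoring pair in $S_i$ avoids $v$ (where shortest paths transfer directly) and the case where it contains $v$ (where $v$ is replaced by a vertex $z$ from another component's MEG-set). Your version is in fact more complete, since the paper merely asserts that $y$ and $z$ monitor $e$ in the second case, while you justify it by decomposing every shortest $z$--$y$ path at the cut-vertex $v$.
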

\begin{proof}
Consider any edge $e$ of $G$, say in $C_1$. Then, there are two vertices $x,y$ of $S_1$ such that $e$ belongs to all shortest paths between $x$ and $y$ in $G_1=G[C_1\cup\{v\}]$. Assume first that $v\notin\{x,y\}$. All shortest paths between $x$ and $y$ in $G$ also exist in $G_1$. Thus, $e$ is monitored by $\{x,y\}\subseteq S$ in $G$. Assume next that $v \in\{x,y\}$: without loss of generality, $v = x$. At least one edge exists in $G[C_2 \cup \{v\}]$, which implies that $S_2\setminus\{v\}$ is nonempty, say, it contains $z$. Then, $e$ is monitored by $y$ and $z$, since $z\in S$. Thus, $S$ monitors all edges of $G$, as claimed.
\end{proof}

\begin{lemma}\label{subgr}
Let $G$ be a graph and $H$ an induced subgraph of $G$ such that for all vertex pairs $\{x, y\}$ in  $H$, no shortest path between them uses edges in $G-H$. Then, for any set $S$ of vertices of $G$ containing an MEG-set of $H$, the edges of $H$ are monitored by $S$ in $G$. 
\end{lemma}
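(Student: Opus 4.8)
The plan is to reduce the monitoring of a single edge of $H$ to a monitoring condition inside $H$, and then transfer it to $G$ using the hypothesis on shortest paths. Let $S'\subseteq S$ be an MEG-set of $H$, and fix an arbitrary edge $e$ of $H$. First I would invoke the definition of an MEG-set of $H$ to obtain a pair $x,y\in S'$ such that $e$ lies on every shortest path between $x$ and $y$ in $H$. Since $x,y\in S'\subseteq S$, it then suffices to show that $x$ and $y$ also monitor $e$ in $G$, that is, that $e$ lies on every shortest path between $x$ and $y$ in $G$.

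The key step is to show that the shortest paths between $x$ and $y$ in $G$ coincide with those in $H$. Since $H$ is a subgraph of $G$, any path in $H$ is also a path in $G$, so $d_G(x,y)\le d_H(x,y)$. Conversely, by the hypothesis, no shortest path between $x$ and $y$ in $G$ uses an edge of $G-H$; as $H$ is induced and $x,y\in V(H)$, any such path lies entirely in $H$ and is therefore a path between $x$ and $y$ in $H$ of length $d_G(x,y)$, giving $d_H(x,y)\le d_G(x,y)$. Hence $d_G(x,y)=d_H(x,y)$, and every shortest path between $x$ and $y$ in $G$ is also a shortest path between $x$ and $y$ in $H$.

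Finally I would combine the two observations: since $e$ lies on every shortest path between $x$ and $y$ in $H$, and every shortest path between $x$ and $y$ in $G$ is one of these, $e$ lies on every shortest path between $x$ and $y$ in $G$. Thus $x,y$ monitor $e$ in $G$, and as $e$ was an arbitrary edge of $H$, all edges of $H$ are monitored by $S$ in $G$, as claimed. The main obstacle, though a mild one, is the step establishing equality of the shortest-path sets: one must use the induced-subgraph assumption to guarantee that a shortest $G$-path avoiding $G-H$ genuinely lives in $H$, and the hypothesis is stated precisely so that this transfer works for every relevant pair, including the monitoring pair $x,y$.
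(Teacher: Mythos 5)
Your proposal is correct and follows essentially the same route as the paper's proof: pick the monitoring pair $x,y$ from the MEG-set of $H$, use the hypothesis to conclude that the shortest $x$--$y$ paths in $G$ coincide with those in $H$, and transfer the monitoring property to $G$. The only difference is that you spell out the coincidence of shortest paths via the two-sided distance comparison $d_G(x,y)=d_H(x,y)$, which the paper simply asserts in one sentence.
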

\begin{proof}
Consider a subset $S \subseteq V(G)$ containing an MEG-set $S'$ of $H$. Let $e$ be an edge in $H$ that lies on all shortest paths between some pair $\{x,y\}$ of vertices of $S'$. By our hypothesis, no shortest path between $x$ and $y$ in $G$ uses any edges of $G-H$. Thus, the shortest paths between $x$ and $y$ in $H$ are the same as in $G$, and therefore in $G$, $e$ is also monitored by $\{x,y\}\subseteq S$.
\end{proof}

\section{Basic graph classes and bounds}\label{sec:classes}
In this section, we study MEG-sets for some standard graph classes.

\subsection{Trees}

\begin{theorem}\label{tt}\label{thm:trees}
For any tree $T$ with at least one edge, the only optimal MEG-set of $T$ consists of the set of leaves of $T$.
\end{theorem}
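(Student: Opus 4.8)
The plan is to prove the two halves separately: that the leaf set is an MEG-set, and that it is forced to lie inside every MEG-set (which gives both optimality and uniqueness at once). I would open by recording the key structural feature of trees: between any two vertices there is a \emph{unique} path, which is therefore the unique shortest path. Consequently, a pair $x,y$ monitors an edge $e$ precisely when $e$ lies on the unique $x$--$y$ path, and $S$ is an MEG-set of $T$ exactly when every edge of $T$ lies on the path between some pair of vertices of $S$.

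For the containment half, I would observe that every leaf of $T$ is a simplicial vertex (its single neighbour forms a trivial clique), so Lemma~\ref{simplicial} immediately forces every leaf into every MEG-set. Writing $L$ for the leaf set, this shows $L \subseteq S$ for any MEG-set $S$.

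The substantive step is showing that $L$ is itself an MEG-set. Fix an edge $e=uv$; deleting $e$ splits $T$ into two subtrees $T_u \ni u$ and $T_v \ni v$. I would argue that each subtree contains a leaf \emph{of $T$} by choosing a vertex $w_u \in T_u$ at maximum distance from $u$ in $T$: if $w_u$ had a neighbour off the $u$--$w_u$ path, that neighbour would again lie in $T_u$ and be farther from $u$, contradicting maximality, so $w_u$ is a leaf of $T$; and when $T_u=\{u\}$ the vertex $u$ is already a leaf. Picking $w_v\in T_v$ symmetrically, the unique $w_u$--$w_v$ path must cross the only edge joining the two subtrees, namely $e$, so $\{w_u,w_v\}\subseteq L$ monitors $e$. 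As $e$ was arbitrary, $L$ is an MEG-set.

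Combining the two halves finishes the proof: $L\subseteq S$ for every MEG-set $S$, while $L$ is itself an MEG-set, so $L$ is the unique minimum MEG-set. The one point demanding care — the main obstacle — is this middle step, namely ensuring that each component of $T-e$ supplies a leaf of the whole tree $T$ and not merely a leaf of the component; the farthest-vertex argument is precisely what rules out the extremal vertex of the component having a further neighbour, and hence guarantees it is a genuine leaf of $T$.
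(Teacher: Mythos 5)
Your proposal is correct and follows essentially the same route as the paper: leaves are forced into every MEG-set because they are simplicial (Lemma~\ref{simplicial}), and the leaf set monitors an arbitrary edge $e$ because two leaves on opposite sides of $e$ are joined by a unique path that must cross $e$. Your farthest-vertex argument merely makes explicit the existence of a leaf of $T$ in each component of $T-e$, a point the paper's proof asserts without detail.
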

\begin{proof} The fact that all leaves must be part of any MEG-set follows from Lemma~\ref{leaf-node}, as they are simplicial. For the other side, let $L$ be the set of leaves of $T$. Let $e = xy$ be an edge of $T$ and consider two leaves of $T$, $l_x$ and $l_y$, such that $l_x$ is closer to $x$ than to $y$ and that $l_y$ is closer to $y$ than to $x$. We note that $e$ belongs to the unique (shortest) path between $l_x$ and $l_y$, thus $e$ is monitored by $L$. Hence, $L$ is an MEG-set of $T$. 
\end{proof}

\begin{corollary}
For any path graph $P_n$, where $n\geq2$, we have $\MEG(P_n) = 2$.
\end{corollary}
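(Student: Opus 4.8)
The plan is to derive this directly from Theorem~\ref{thm:trees}, since a path graph $P_n$ is itself a tree. First I would observe that $P_n$ is a tree with exactly two leaves, namely its two endpoints; every internal vertex of the path has degree~$2$ and is therefore not a leaf. By Theorem~\ref{thm:trees}, for any tree with at least one edge the unique optimal MEG-set is precisely its set of leaves. Since $n \geq 2$ guarantees that $P_n$ has at least one edge, this theorem applies directly.

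Applying the theorem, the optimal MEG-set of $P_n$ is its set of leaves, which consists of exactly the two endpoints. Hence $\MEG(P_n) = 2$, as claimed. The only thing worth spelling out is the elementary graph-theoretic fact that a path on $n \geq 2$ vertices has exactly two leaves, which I would state in one sentence rather than belabour.

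I do not anticipate any real obstacle here: the statement is an immediate specialization of the preceding theorem, and the entire content of the proof is recognizing that a path is a tree and counting its leaves. The brevity is expected, since this is a corollary meant only to record a convenient special case.
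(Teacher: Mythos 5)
Your proof is correct and matches the paper's reasoning exactly: the corollary is stated there as an immediate consequence of Theorem~\ref{thm:trees}, using precisely the observation that a path on $n\geq 2$ vertices is a tree whose leaves are its two endpoints. No gaps; the brevity is appropriate.
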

This provides a lower bound which is tight for path graphs, which have order $n$ and exactly 2 leaves.

\begin{corollary}
For any tree $T$ of order $n\geq 3$, we have $2 \leq\MEG(T)\leq n-1$.
\end{corollary}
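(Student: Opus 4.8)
The plan is to reduce both inequalities to the exact characterization provided by Theorem~\ref{thm:trees}. That theorem tells us that the unique optimal MEG-set of $T$ is precisely its set of leaves $L$, so that $\MEG(T) = |L|$. Hence the claim $2 \leq \MEG(T) \leq n-1$ is equivalent to the two classical facts that every tree of order $n \geq 3$ has at least two and at most $n-1$ leaves. The whole argument therefore comes down to counting leaves, and I would state this reduction explicitly as the first line of the proof.

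For the lower bound $\MEG(T) \geq 2$, I would invoke the standard fact that any tree with at least one edge has at least two leaves. The cleanest way to see this is to take a longest path in $T$: both of its endpoints must have degree~$1$, since otherwise the path could be extended (using that $T$ is acyclic, so no extension can revisit a vertex). This gives two distinct leaves whenever $n \geq 2$, and in particular for $n \geq 3$. Alternatively one can argue via the handshake identity $\sum_{v} \deg(v) = 2(n-1)$, which forces at least two vertices of degree~$1$.

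For the upper bound $\MEG(T) \leq n-1$, I would show that a tree of order $n \geq 3$ always has at least one internal (non-leaf) vertex, so that $|L| \leq n-1$. This again follows from $\sum_{v} \deg(v) = 2(n-1)$: if all $n$ vertices were leaves the degree sum would be $n$, but $2(n-1) > n$ for $n \geq 3$, a contradiction. Thus at least one vertex has degree at least~$2$, yielding $|L| \leq n-1$. I would also note the tightness of both bounds: the path $P_n$ attains the lower bound (two leaves, already recorded in the preceding corollary), and the star $K_{1,n-1}$ attains the upper bound (its $n-1$ leaves being its unique optimal MEG-set).

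I do not anticipate a genuine obstacle here, as the statement is a direct corollary of Theorem~\ref{thm:trees} combined with elementary tree facts; the only point requiring care is to phrase the leaf-count bounds rigorously (e.g.\ via the degree-sum identity) rather than treating them as self-evident.
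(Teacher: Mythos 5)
Your proposal is correct and follows exactly the route the paper intends: the corollary is a direct consequence of Theorem~\ref{thm:trees} (so $\MEG(T)$ equals the number of leaves), combined with the elementary facts that a tree of order $n\geq 3$ has at least two leaves and at least one internal vertex. The paper leaves these leaf-counting facts implicit, whereas you spell them out via the longest-path and degree-sum arguments, which is a harmless (and rigorous) elaboration of the same proof.
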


The upper bound is tight for star graphs, which have order $n$ and $n-1$ leaves.

\subsection{Cycle graphs}

\begin{theorem}\label{cyclet}
Given an $n$-cycle graph $C_n$, for $n = 3$ and $n \geq 5$, $\MEG(C_n) = 3$. Moreover, $\MEG(C_4) = 4$.
\end{theorem}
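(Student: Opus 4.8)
The plan is to treat the three regimes separately, leaning on the twin and simplicial lemmas for the exceptional small cases and on an explicit arc-placement argument for $n\geq 5$. For $n=3$, every vertex of $C_3$ is simplicial (its two neighbours are adjacent), so by Lemma~\ref{simplicial} all three vertices lie in any MEG-set; conversely, the three vertices do monitor every edge, since each edge is the unique shortest path between its endpoints, giving $\MEG(C_3)=3$. For $n=4$, writing the vertices cyclically as $v_0v_1v_2v_3$, the pairs $\{v_0,v_2\}$ and $\{v_1,v_3\}$ are open twins, so Lemma~\ref{twinv} forces all four vertices into any MEG-set; since $V(C_4)$ is itself an MEG-set, $\MEG(C_4)=4$.

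For $n\geq 5$ I would first establish the lower bound $\MEG(C_n)\geq 3$. A single vertex monitors no edge, and a pair $\{x,y\}$ monitors an edge only when $x$ and $y$ are non-antipodal: an antipodal pair has two edge-disjoint shortest paths and hence monitors nothing, while a non-antipodal pair has a unique shortest path of length $d(x,y)\leq\lfloor n/2\rfloor$. Thus any two vertices monitor at most $\lfloor n/2\rfloor<n$ of the $n$ edges, so at least three vertices are required.

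For the matching upper bound I would place three vertices so that they split the cycle into three arcs of lengths $a,b,c$ with $a+b+c=n$ and each of $a,b,c$ strictly less than $n/2$; taking the arc sizes as balanced as possible gives a maximum arc of $\lceil n/3\rceil$, which is $<n/2$ precisely for $n\geq 5$. For each pair of chosen vertices the two connecting paths have lengths equal to one arc and to the sum of the other two, and because each arc is shorter than $n/2=(a+b+c)/2$, the short arc is the \emph{unique} shortest path, so that pair monitors exactly the edges of its arc. The three arcs together form all of $E(C_n)$, so the three chosen vertices constitute an MEG-set and $\MEG(C_n)\leq 3$.

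The only delicate point, and the reason the theorem singles out $n=4$, is the existence of the balanced arc partition: writing $n$ as three positive parts each below $n/2$ is possible exactly when $n\geq 5$ (and trivially when $n=3$). For $n=4$ the only such decomposition is $1+1+2$, and the part of size $2=n/2$ forces an antipodal pair that monitors no edge — this is precisely the obstruction that pushes $\MEG(C_4)$ up to $4$, and checking that this failure is genuinely unavoidable is the main thing to verify carefully.
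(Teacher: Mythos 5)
Your proof is correct, and for the core case $n\geq 5$ it follows the same route as the paper: the paper's MEG-set is $\{v_0, v_{\lfloor n/3\rfloor}, v_{\lfloor 2n/3\rfloor}\}$, which is exactly your balanced arc partition, and its lower bound likewise rests on the observation that a single pair of vertices leaves the edges of one of the two connecting paths unmonitored. Where you genuinely differ is in the small cases and in the level of rigor. For $C_4$ the paper argues by symmetry that any $3$-subset, say $\{v_0,v_1,v_2\}$, leaves the edge $v_0v_3$ unmonitored; you instead invoke Lemma~\ref{twinv} on the open twins $\{v_0,v_2\}$ and $\{v_1,v_3\}$, which immediately forces all four vertices into every MEG-set. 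That is cleaner, and it yields the stronger conclusion that $V(C_4)$ is the \emph{unique} MEG-set of $C_4$. Similarly, for $C_3$ you use Lemma~\ref{simplicial} rather than the general construction (the paper's own treatment of $K_n$ does the same, so this is consistent with the paper's toolkit). Your lower bound for $n\geq 5$ is also sharper than the paper's: rather than asserting that the edges of some path are missed, you count that an antipodal pair monitors no edge and a non-antipodal pair monitors at most $\lfloor n/2\rfloor < n$ edges, so a $2$-set cannot cover all $n$ edges; this quantitative form is what one would want if the argument were reused elsewhere (as the paper implicitly does in Theorem~\ref{uct}). One cosmetic remark: your closing paragraph presents the unavoidability of the failure for $C_4$ as something still ``to verify carefully,'' but your twin argument in the first paragraph has already verified it, so the proof is complete without that caveat.
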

\begin{proof}
Let us first prove that we need at least three vertices to monitor any cycle. By contradiction, let us assume that two vertices suffice. For any arbitrary vertex pair in the cycle graph, there are two paths joining them, but there is either one single shortest path or two equidistant shortest paths between them. Thus, the edges on at least one of the two paths between the pair will not be monitored by it. Hence, we need at least three vertices in any MEG-set of $C_n$ ($n\geq 3$).

We now prove the upper bound. Let $n\geq 5$ or $n = 3$, with the vertices of $C_n$ from $v_0$ to $v_{n-1}$. Consider the set $S =\{v_0, v_{\floor{\frac{n}{3}}}$, $v_{\floor{\frac{2n}{3}}}\}$. We show that $S$ is an MEG-set of $C_n$.
 
Consider any edge of $C_n$ between a vertex pair $v_x$ and $v_y$ in $S$, then we note that it lies on every (unique) shortest path between these vertices, which has length at least~1 for $n\leq 5$ and at least~2 otherwise, and at most $\lfloor \frac{n-1}{2}\rfloor<\frac{n}{2}$. 
Moreover, every edge of $C_n$ lies on such a path. Thus, $\MEG(C_n) = 3$ when $n \geq 5$ or $n=3$.

In the case of $C_4$, the above construction does not work. Consider a set of three vertices, say $v_0$, $v_1$, $v_2$ without loss of generality due to the symmetries of $C_4$. Notice that the edge $v_0v_3$ is unmonitored by this set. Thus, we have $\MEG(C_4)=4$.
\end{proof}

\subsection{Unicyclic graphs}

A \emph{unicyclic graph} is a connected graph containing exactly one cycle~\cite{b1}. We now determine the optimal size of an MEG-set of such graphs. Note that in the short version of this paper, published in the proceedings of CALDAM~\cite{CALDAM}, the statement was slightly mistaken. Here, we correct it.

\begin{theorem} \label{uct}
Let $G$ be a unicyclic graph where the only cycle $C$ has length $k$ and whose set of pendant vertices is $L(G)$, $|L(G)| = l$. Let $V_{c}^{+}$ be the set of vertices of $C$ with degree at least~3. Let $p(G)=1$  
if $G[V(C)\setminus V_{c}^{+}]$ contains a path with at least $\lfloor\frac{k-1}{2}\rfloor$ vertices, 
and $p(G)=0$ otherwise.
Then, if $k\in \{3,4\}$,
\begin{equation*}
\MEG(G) = l + k - |V_c^{+}|.
\end{equation*}

Otherwise ($k \geq 5$), then
\begin{equation*}
\MEG(G) = 
\begin{cases}
3, \text{ if $|V_c^{+}|=0$;}\\
l + 2, \text{ if $|V_c^{+}|=1$;}\\
l + 2, \text{if $|V_c^{+}|=2$, $k$ is even, and the vertices in $V_c^{+}$ are either adjacent or opposite on $C$;}\\
l + p(G), \text{ in all other cases.}
\end{cases}    
\end{equation*}
\end{theorem}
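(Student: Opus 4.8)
The plan is to reduce the problem to monitoring the edges of the cycle $C$, after disposing of the tree parts and the forced vertices. First, by Lemma~\ref{simplicial} every pendant vertex lies in any MEG-set, so the term $l$ is a contribution present in every case. Each vertex of $V_c^+$ is a cut-vertex of $G$ whose removal separates the pendant tree(s) hanging at it from the rest of the graph; hence by combining Lemma~\ref{cutv}, Lemma~\ref{subgr} and the description of optimal MEG-sets of trees in Theorem~\ref{thm:trees}, I would first show that the leaf set $L(G)$ already monitors every non-cycle edge of $G$. This isolates the real content of the statement: deciding how many, and which, additional vertices are needed to monitor the $k$ edges lying on $C$.

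The key structural observation I would use is that every shortest path leaving a pendant tree must pass through its attachment vertex in $V_c^+$; consequently, for the purpose of monitoring cycle edges, each leaf behaves exactly like a probe placed at its attachment vertex. Monitoring the edges of $C$ therefore becomes the problem of monitoring a cycle using ``virtual probes'' available for free at each vertex of $V_c^+$, together with whatever extra vertices we choose to add. With this reduction in hand the case analysis is natural. When $|V_c^+|=0$ we have $G=C_k$ and simply invoke Theorem~\ref{cyclet}. When $|V_c^+|=1$ there is a single virtual-probe location $u$, so the only pairs that can monitor cycle edges are those of the form $(u,\cdot)$, covering arcs emanating from $u$; I would argue that these reach at most one ``side'' of the cycle, so two further cycle vertices are both necessary and sufficient to complete a well-spread triple as in the proof of Theorem~\ref{cyclet}, yielding $\MEG(G)=l+2$ (and $l+k-1$ for the short cycles $k\in\{3,4\}$, handled using the $C_3,C_4$ computations).

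The substantial case is $|V_c^+|>1$. Here I would prove that an edge lying on the arc between two consecutive attachment vertices is monitored by the virtual probes precisely when that arc is strictly shorter than its complementary arc, i.e.\ when it is the unique shortest path between its endpoints. If every inter-attachment arc is short in this sense, equivalently $G[V(C)\setminus V_c^+]$ contains no long path and $p(G)=0$, then all cycle edges are already monitored by $L(G)$ and $\MEG(G)=l$. Otherwise there is a single ``long'' arc, and I would exhibit one extra vertex inside it whose pairs with the two flanking attachment vertices cover the remaining edges, giving $\MEG(G)=l+p(G)=l+1$; the short cycles $k\in\{3,4\}$ are again treated directly to obtain $l+k-|V_c^+|$. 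The main obstacle I anticipate is exactly this last analysis: pinning down, in a parity-sensitive way, which arcs force an extra vertex, calibrating the threshold $\floor*{\frac{k}{2}}$ that defines $p(G)$, and, most delicately, handling the ties that arise for even $k$ when an arc has length close to $\floor*{\frac{k}{2}}$, where no pair monitors that arc's central edges. Proving both that one extra vertex suffices when $p(G)=1$ and that it is genuinely necessary, via the confinement of the virtual probes to $V_c^+$, is where the care is concentrated.
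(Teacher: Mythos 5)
Your overall strategy is the same as the paper's: force the leaves via the simplicial-vertex lemma, observe that each pendant tree makes its attachment vertex on $C$ behave like a free probe (your ``virtual probes'' are exactly the paper's remark that a leaf ``simulates'' its attachment vertex), reduce to monitoring the cycle edges, and split into the cases $|V_c^+|=0$, $|V_c^+|=1$, $|V_c^+|>1$ with Theorem~\ref{cyclet} as the base case. Up to and including the case $|V_c^+|=1$ your outline matches the paper's proof. The problem is the case $|V_c^+|>1$, which you yourself identify as the crux and then defer. Your structural criterion there is the right one: an edge on an arc of $C$ between two consecutive attachment vertices is monitored by leaf pairs if and only if that arc is strictly shorter than its complementary arc (any cycle path between attachment vertices containing an edge of the arc must contain the whole arc, so strict shortness is both necessary and sufficient). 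But the equivalence you then assert---that this holds for all arcs exactly when $p(G)=0$---is false under the paper's definition of $p(G)$, and this is precisely where the proof cannot be completed as you describe. An arc of length $d$ between consecutive attachment vertices induces a path of length $d-2$ in $G[V(C)\setminus V_c^+]$ (only its interior vertices survive), so $p(G)=0$ only guarantees $d \leq \floor*{\frac{k}{2}}+1$, whereas your criterion needs $d < \frac{k}{2}$. These thresholds differ by roughly two, and the discrepancy is fatal.

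Concretely: take $k=5$ with $V_c^+=\{v_0,v_2\}$ and one leaf on each. The arcs have lengths $2$ and $3$; the induced paths have lengths $0$ and $1$, both less than $\floor*{\frac{5}{2}}=2$, so $p(G)=0$ and the claimed value is $\MEG(G)=l=2$. Yet the two leaves monitor no edge of the length-$3$ arc (the unique shortest $v_0$--$v_2$ path goes through $v_1$), so $\MEG(G)>l$. Worse, take $k=8$ with $V_c^+=\{v_0,v_4\}$ antipodal: again $p(G)=0$, but every leaf pair has two shortest paths around the cycle, no cycle edge is monitored by leaves, and one can check that two extra vertices are required, so $\MEG(G)=l+2$, while your ``one extra vertex inside the single long arc'' claim (and the formula $l+p(G)$ itself) caps the answer at $l+1$; note also that in this tie case there is no single long arc but two tied ones. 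So the parity/tie issue you flagged as ``most delicate'' is not a technicality to be calibrated away: the equivalence your plan rests on does not hold, and no $\{0,1\}$-valued correction term defined via $\floor*{\frac{k}{2}}$ can repair it. For what it is worth, the paper's own proof has the same defect---it asserts ``since $p(G)=0$, this path is a shortest path,'' which fails in both examples above---so your instinct about where the danger lies was exactly right; but flagging the obstacle is not the same as overcoming it, and as written your proposal, like the paper's argument, does not establish the stated formula in the case $|V_c^+|>1$.
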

\begin{proof}
Let $G$ be a unicyclic graph where the only cycle $C$ has length $k$ and whose set of pendant vertices is $L(G)$. By Lemma \ref{leaf-node}, all leaves are part of any MEG-set of $G$. This implies that $\MEG(G)$ is at least $l$. If $|V_C^+|=0$ (i.e. $l = 0$), we are done by Theorem~\ref{cyclet}, so let us assume $|V_C^+|>0$ and thus, $l>0$.

Similarly as in the proof of Lemma~\ref{cutv}, for every vertex $v$ of $V_C^+$, we know that at least one leaf will exist in the tree component $T_v$ formed if we remove the neighbors of $v$ in $C$ from $G$. Informally speaking, towards the rest of the graph, this leaf simulates the fact that $v$ is in the solution set.

If $k \in \{3,4\}$, we consider $S=L(G)$ and we add to $S$ all vertices of $C$ that are of degree~2 in $G$. One can easily check that this is an MEG-set. Moreover, one can see that adding these degree~2 vertices is necessary by using similar arguments as in the proof of Theorem~\ref{cyclet} on cycles.

Next, we assume that $k\geq 5$. Let $v_0,\ldots,v_{k-1}$ be the vertices of $C$. We have $p(G)=1$ if there is an edge of the cycle that is not on a unique shortest path between two vertices of $V_c^{+}$. In most cases, these edges would form a path in $G$, and require an additional vertex to monitor them. In some cases, these edges include all edges of $C$: when $k$ is even, $|V_c^{+}|=2$ and the two vertices of $V_c^{+}$ are opposite on the cycle. Then, we require two additional vertices to monitor them. In the special case where $k$ is even, $|V_c^{+}|=2$ and the two vertices of $V_c^{+}$ are adjacent, we also require two additional vertices. Let us now analyze these situations in detail.

When $|V_c^{+}| = 1$, without loss of generality, consider the vertex in $V_c^{+}$ to be $v_0$. Then, the vertices \(\{v_{\lfloor\frac{k}{3}\rfloor},v_{\lfloor\frac{2k}{3}\rfloor}\}\) on the cycle together with the pendant vertices are sufficient to monitor the graph, in the same way as in Theorem~\ref{cyclet}, showing $\MEG(G)\leq l+2$. Moreover, by the same arguments as in the proof of Theorem~\ref{cyclet}, one can see that if at most one vertex on $C$ is chosen in the MEG-set, some edge of $C$ will not be monitored, proving the lower bound $\MEG(G)\geq l+2$.

Consider the case where $k$ is even and $|V_c^{+}|=2$ such that the vertices in $V_c^{+}$ are adjacent (then, $p(G)=1$). Then, $G[V(C)\setminus V_{c}^{+}]$ consists of a single path $P$ with $k-2$ vertices, and the edges of $P$ (as well as the two edges joining $P$ to $V_{c}^{+}$) are not monitored by the set of leaves of $G$. We need to pick at least one vertex of $P$ to monitor these edges. If we pick only one vertex of $P$, then at least one of the edges of $P$ is not monitored (for example, the one that is, in the subgraph of $G$ induced by $P$, farthest from the picked vertex). Thus, we need to pick at least two additional vertices and $\MEG(G)\geq l+2$. Picking the two middle vertices of $P$, for example, is sufficient and yields an MEG-set of the desired size, $l+2$.

Similarly, if $k$ is even and $|V_c^{+}|=2$ such that the vertices in $V_c^{+}$ are opposite on $C$, there exist two paths of equal length between the two vertices of $V_c^{+}$, no edge of $C$ is monitored by the leaves of $G$, and $G[V(C)\setminus V_{c}^{+}]$ consists of two paths $P_1$ and $P_2$, each with $k/2-1$ vertices. We have $\MEG(G)\geq l+2$ because we need to pick at least one vertex of each of $P_1$ and $P_2$. We can in fact pick any vertex of $P_1$ and any vertex of $P_2$, and obtain an MEG-set of size $l+2$, as desired.

Let us next assume we are in neither of the previous cases.

If $|V_c^{+}|> 1$ and $p(G)=0$, the $l$ pendant vertices are sufficient to monitor $G$. Indeed, consider an edge $e$. If $e$ is not on $C$, let $v$ be the vertex of $V_C^+$ closest to $e$, and let $w\neq v$ be the vertex of $V_C^+$ closest to $v$ (it exists because $|V_c^{+}|>1$). Consider a leaf $f$ of $G$ such that $e$ lies on some path from $v$ to $f$. Since $p(G)=0$, the path from $w$ to $f$ is a unique shortest path, and thus, $e$ is monitored by $f$ and some leaf whose closest vertex on $C$ is $w$.

If $e$ is an edge of $C$, $e$ lies on a path of length strictly less than $\frac{k}{2}$ between two vertices $v,w$ of $V_C^+$. Since $p(G)=0$, this path is a unique shortest path between $v$ and $w$, and $e$ is monitored by two leaves, each of which has $v$ and $w$ as its closest vertex of $C$, respectively.

Finally, assume that $|V_c^{+}|>1$ and $p(G)=1$. Now, there is only one problematic path $P$ of $G[V(C)\setminus V_{c}^{+}]$ with at least $\lfloor\frac{k}{2}\rfloor$ vertices (since we already dealt with case where $k$ is even with two vertices in $V_c^{+}$ that are opposite on $C$). As in the previous cases, we can see that the edges of $P$ are not monitored by the set of leaves of $G$, which implies that $\MEG(G)\geq l+1$. To show that $\MEG(G)\leq l+1$, we select as an MEG-set, the set of leaves together with the middle vertex of $P$ (if $P$ has an odd number of vertices) or one of the middle vertices of $P$ (if $P$ has an even number of vertices). One can see that this is an MEG-set by similar arguments as in the previous cases.
\end{proof}

\subsection{Complete graphs}

The following follows immediately from Lemma~\ref{simplicial}, since every vertex of a complete graph is simplicial.

\begin{theorem}
 For any $n \geq 2$, we have $\MEG(K_n) = n$.
\end{theorem}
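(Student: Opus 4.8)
The plan is to prove the two inequalities $\MEG(K_n)\ge n$ and $\MEG(K_n)\le n$ separately, both of which are essentially immediate.

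For the lower bound, I would first observe that every vertex $v$ of $K_n$ is simplicial: its open neighborhood $N(v)=V(K_n)\setminus\{v\}$ induces a complete graph (since $K_n$ is complete), hence a clique. Applying Lemma~\ref{simplicial} to each vertex, every vertex must belong to any MEG-set of $K_n$. Since this holds for all $n$ vertices, any MEG-set contains all of $V(K_n)$, giving $\MEG(K_n)\ge n$.

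For the upper bound, I would invoke the fact noted just after the definition of MEG-sets, namely that $V(G)$ is always an MEG-set of $G$; in particular $V(K_n)$ is an MEG-set of size $n$, so $\MEG(K_n)\le n$. (If one prefers a self-contained check, for any edge $uv$ the unique shortest path between $u$ and $v$ is the single edge $uv$ itself, which is therefore monitored by the pair $\{u,v\}$, confirming directly that $V(K_n)$ monitors every edge.) Combining the two bounds yields $\MEG(K_n)=n$ for all $n\ge 2$.

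There is no real obstacle here: the statement is a direct corollary of Lemma~\ref{simplicial} together with the trivial fact that the full vertex set is always a valid MEG-set. The only point worth stating explicitly is the simpliciality of every vertex, which is what forces the lower bound.
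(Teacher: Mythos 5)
Your proof is correct and matches the paper's approach exactly: the paper also derives the result immediately from Lemma~\ref{simplicial}, since every vertex of $K_n$ is simplicial, with the upper bound coming from the trivial fact (noted after the definition) that $V(G)$ is always an MEG-set. Your write-up simply makes both halves of this argument explicit.
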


\subsection{Complete multipartite graphs}

The complete $k$-partite graph $K_{p_1,p_2,\ldots,p_k}$ consists of $k$ disjoint sets of vertices of sizes $p_1,p_2,\ldots,p_k$, with an edge between any two vertices from distinct sets.

\begin{theorem}
We have $\MEG(K_{p_1,p_2,\ldots,p_k}) = \abs{V(K_{p_1,p_2,\ldots,p_k})}$, with the exceptional case of a bipartite graph $K_{1,p}$ with an independent set of size 1 (a star graph), for which $\MEG(K_{1,p})=p$.
\end{theorem}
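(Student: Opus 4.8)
The upper bound $\MEG(G)\le \abs{V(K_{p_1,\ldots,p_k})}$ is immediate, since $V(G)$ is always an MEG-set. The entire content is therefore the lower bound, and my plan is to show that for every complete multipartite graph $G=K_{p_1,\ldots,p_k}$ other than a star, \emph{every} vertex is forced into any MEG-set. The driving observation is that such a graph has diameter at most~$2$, so every shortest path has length $1$ or $2$; this lets me classify completely the pairs that can monitor a given edge.

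First I would record the monitoring possibilities for a pair $\{x,y\}$ according to $d_G(x,y)$. If $d_G(x,y)=1$, the unique shortest path is the edge $xy$ itself, so $\{x,y\}$ monitors exactly one edge, namely $xy$, and this requires both $x$ and $y$ to lie in the MEG-set. If $d_G(x,y)=2$, then $x$ and $y$ lie in a common part $V_i$, and their common neighbours are precisely the vertices of $V(G)\setminus V_i$; each such common neighbour $w$ yields a shortest path $x\,w\,y$ with edge set $\{xw,wy\}$. The crucial point is that for two distinct common neighbours $w\neq w'$ these edge sets are disjoint, so as soon as a distance-$2$ pair has at least two common neighbours, \emph{no} edge lies on all of its shortest paths, and the pair monitors nothing.

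The next step is to pin down exactly when a distance-$2$ pair can have a unique common neighbour, since this is the only way such a pair can monitor an edge. A pair in part $V_i$ has $\abs{V(G)}-p_i$ common neighbours, so this number equals $1$ precisely when there is a single vertex outside $V_i$; that forces $k=2$ and the second part to be a singleton, i.e.\ $G=K_{1,p}$ with $p\ge 2$, a star. I expect verifying this case analysis — in particular checking that for $k\ge 3$, and for $k=2$ with both parts of size at least~$2$, every distance-$2$ pair has at least two common neighbours — to be the main (though routine) obstacle, as it is what isolates the star as the sole exception.

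Combining these facts, in any non-star $G$ the only pair that can monitor an edge $e=ab$ is $\{a,b\}$ itself (via the distance-$1$ case); hence both $a$ and $b$ must belong to any MEG-set. Since $G$ is connected with at least one edge, every vertex is an endpoint of some edge and is therefore forced, giving $\MEG(G)=\abs{V(G)}$. (The open-twin Lemma~\ref{twinv} already forces all vertices lying in parts of size at least~$2$, so in fact only the singleton parts genuinely require the distance argument above.) Finally, for the star $K_{1,p}$ with $p\ge 2$ I would simply invoke Theorem~\ref{thm:trees}: a star is a tree whose leaves are its $p$ outer vertices, so that $\MEG(K_{1,p})=p$.
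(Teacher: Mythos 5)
Your proposal is correct, and it reaches the conclusion by a somewhat different route than the paper. The paper's proof is modular: it invokes the twin lemma (Lemma~\ref{twinv}) to force all vertices lying in parts of size at least~2 (open twins) and all vertices of multiple singleton parts (closed twins), and only then handles the one remaining configuration --- a unique singleton part with $k\geq 3$ --- by observing that its vertex is never on a unique shortest path, before finishing the star case with Theorem~\ref{thm:trees} exactly as you do. You instead bypass the twin lemma entirely and give a uniform, self-contained classification of all monitoring pairs in a diameter-$2$ complete multipartite graph: a distance-$1$ pair monitors only its own edge, and a distance-$2$ pair monitors nothing unless it has a unique common neighbour, which you correctly show happens only in the star $K_{1,p}$. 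This single argument forces every vertex at once in the non-star case. What the paper's approach buys is brevity, by reusing lemmas already proved; what yours buys is a sharper structural statement (a complete description of which pairs monitor which edges, with the star isolated as exactly the case where a distance-$2$ pair can monitor), and it even resolves the borderline case $K_{1,1}$ cleanly, which the theorem's statement leaves slightly ambiguous. Both proofs are valid; yours is a legitimate and arguably more transparent alternative.
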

\begin{proof}
In a complete $k$-partite graph, all vertices in a given partite set are twins. Therefore, by Lemma~\ref{twinv}, all vertices of any partite set of size at least~2 need to be a part of any MEG-set.

If we have several partite sets of size~1, then the vertices from these sets are closed twins, and again by Lemma~\ref{twinv} they all belong to any MEG-set.

Thus, we are done, unless there is a unique partite set of size~1, whose vertex we call $v$. If there are at least three partite sets, then note that $v$ is never part of a unique shortest path, and thus the edges incident with $v$ cannot be monitored if $v$ is not part of the MEG-set.

On the other hand, if the graph is bipartite, it is a star $K_{1,p}$. Here, we know by Theorem~\ref{thm:trees} that $\MEG(G)=p$, as claimed.
\end{proof}

\subsection{Hypercubes}

The \emph{hypercube of dimension $n$}, denoted by $Q_n$, is the undirected graph consisting of $k = 2^n$ vertices labelled from $0$ to $2^n - 1$ and such that there is an edge between any two vertices if and only if the binary representations of their labels differ by exactly one bit~\cite{r8}. The \emph{Hamming distance} $H(A,B)$ between two vertices $A,B$ of a hypercube is the number of bits where the two binary representations of its vertices differ. 

We next show that not only $C_4$ has the whole vertex set as its only MEG-set (Theorem~\ref{cyclet}), but that this also holds for all hypercubes.

\begin{theorem}\label{hyperc}
For a hypercube graph $Q_n$ with $n \geq 2$, we have $\MEG(Q_n) = 2^n$.
\end{theorem}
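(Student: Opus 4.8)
The plan is to show that every vertex of $Q_n$ must belong to any MEG-set, which immediately gives $\MEG(Q_n) = 2^n$ since $V(Q_n)$ is always an MEG-set. The key structural fact I would exploit is that in a hypercube, shortest paths are highly non-unique: between two vertices $A$ and $B$ at Hamming distance $d$, every geodesic corresponds to flipping the $d$ differing bits in some order, so there are $d!$ shortest paths whenever $d \geq 2$, and these paths collectively use many different edges. The strategy is therefore to argue that no single edge $e$ incident to a vertex $v$ can be monitored by a pair not containing $v$, by exhibiting for any candidate pair $\{x,y\}$ a second shortest path avoiding $e$.

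First I would fix an arbitrary vertex $v$ and an edge $e = vw$ incident to it (note $e$ flips exactly one bit, say bit $i$). Suppose for contradiction that some pair $\{x,y\}$ with $v \notin \{x,y\}$ monitors $e$, so $e$ lies on \emph{all} shortest $x$--$y$ paths. Since $e$ lies on some geodesic from $x$ to $y$, we have $d(x,y) = d(x,v) + 1 + d(w,y)$ (with $e$ traversed from $v$ to $w$, say), meaning $v$ and $w$ both lie on geodesics between $x$ and $y$. The crux is to show that the bit $i$ flipped by $e$ need not be the \emph{unique} next bit to flip: because $x \neq v$, the vertex $x$ differs from $v$ in at least one bit, and among the bits that must be flipped along a geodesic from $x$ through $v$ to $y$, there is freedom in ordering them. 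Concretely, I would show that one can reroute the path so that a different bit is flipped at the step corresponding to $e$, producing a shortest $x$--$y$ path that traverses the edge at $v$ in a different coordinate direction, hence avoiding $e$. This contradicts the assumption that $e$ is on all shortest paths.

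The main obstacle is handling the geometry cleanly: I must verify that whenever $v$ is an interior vertex of an $x$--$y$ geodesic, there genuinely exist at least two distinct edges at $v$ that each extend some shortest path, so that no single incident edge is forced. This reduces to the observation that if $v$ lies strictly between $x$ and $y$ on a geodesic, then $v \neq x$ and $v \neq y$ forces $d(x,v) \geq 1$ and $d(v,y) \geq 1$; the edge $e = vw$ can only be on all geodesics if flipping bit $i$ is the sole admissible move from $v$ toward $y$, which happens only if $d(v,y)$ involves just that one bit and $v$ is the immediate predecessor of $y$ along every geodesic. I would then separate into the case where $w$ is strictly between $v$ and $y$ (where the $d!$-many orderings of the remaining differing bits give an alternative geodesic avoiding $e$) and the degenerate case where $e$ is the last edge, i.e.\ $w = y$ or $v = x$ (where the symmetric analysis from the other endpoint, using that $v \notin \{x,y\}$, again yields a bit-swap producing a different geodesic). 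Collecting these contradictions shows $v$ must be in every MEG-set, completing the proof.
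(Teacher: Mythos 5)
Your proposal is correct, and its overall skeleton matches the paper's: assume a vertex $v$ is missing from the MEG-set, and show that no edge incident to $v$ can be monitored by a pair $\{x,y\}$ with $v\notin\{x,y\}$, so every vertex is forced into every MEG-set and $\MEG(Q_n)=2^n$. The difference lies in how the key non-uniqueness of geodesics is established. The paper does no rerouting by hand: it simply invokes the known result~\cite{r8} that between any two vertices at Hamming distance $h$ there are $h$ internally vertex-disjoint geodesics of length $h$; an edge incident to $v$ can lie on at most one of these disjoint paths (and a pair at distance~$1$ monitors only the edge joining it), so no pair avoiding $v$ can have an edge at $v$ on \emph{all} of its geodesics. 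Your argument instead proves the needed fact from scratch: writing $e=vw$ as the flip of bit $i$, you use $x\neq v$ to pick a bit $j$ where $x$ and $v$ differ and delay flipping $j$ until after bit $i$, obtaining a geodesic from $x$ to $y$ whose unique bit-$i$ edge has endpoints different from $v$, hence avoiding $e$. This buys self-containedness (no appeal to an external disjoint-paths theorem) at the cost of some bookkeeping. Two small points: the ``delay one bit'' construction already handles all cases uniformly --- including $w=y$ --- so your closing case split is unnecessary, and the subcase $v=x$ you mention is vacuous since $v\notin\{x,y\}$ by assumption; also, your phrase about the rerouted path ``traversing the edge at $v$ in a different coordinate direction'' is imprecise, since the rerouted geodesic need not pass through $v$ at all --- what matters is only that its bit-$i$ edge is not $e$.
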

\begin{proof}
Assume by contradiction that there is an MEG-set $M$ of size at most $2^n - 1$. Let $v \in V(G)$ be a vertex that is not in $M$. It is known that for every vertex pair $\{v_x, v\}$ with $H(v_x , v)\leq n$, there are $H(v_x , v)$ vertex-disjoint paths of length $H(v_x , v)$ between them~\cite{r8}. Thus, there is no vertex pair in $M$ with a unique shortest path going through the edges incident with $v$, and $M$ is not an MEG-set, a contradiction.
\end{proof}

\subsection{Grid graphs}

The graph $G \bigboxvoid H$ is the Cartesian product of graphs $G$ and $H$ and with vertex set $V(G \bigboxvoid H) = V(G)$ x $V(H)$, and for which $\{(x,u),(y,v)\}$ is an edge if $x = y$ and
$\{u,v\}\in E(H)$ or $\{x,y\}\in E(G)$ and $u = v$. The \emph{grid graph} $G(m,n)$ is the Cartesian product $P_m \bigboxvoid P_n$ with vertex set $\{(i,j)~|~1\leq i\leq m, 1\leq j\leq n\}$.

\begin{theorem}
For any $m,n\geq 2$, we have $\MEG(G(m,n)) = 2(m+n-2)$.
\end{theorem}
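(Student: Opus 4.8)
The value $2(m+n-2)$ is exactly the number of \emph{boundary} vertices of the grid, i.e.\ those lying in the first or last row or in the first or last column (each corner counted once). The plan is therefore to prove that the set $B$ of all boundary vertices is a minimum MEG-set, by establishing the two matching bounds. Throughout I would use the standard facts that in $G(m,n)$ the distance between $(i_1,j_1)$ and $(i_2,j_2)$ is the Manhattan distance $|i_1-i_2|+|j_1-j_2|$, that the shortest paths between them are exactly the monotone staircase paths inside their bounding box, and that a pair $\{x,y\}$ monitors an edge $e$ if and only if $d_{G-e}(x,y)>d_G(x,y)$ (deleting $e$ strictly increases the distance, which matches the motivating definition).

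For the upper bound I would check that $B$ is an MEG-set. A horizontal edge $\{(i,j),(i,j+1)\}$ lies on the \emph{unique} shortest path between $(i,1)$ and $(i,n)$: these vertices share row $i$, so their only shortest path is the straight segment of that row, which contains the edge; moreover $(i,1),(i,n)\in B$. Symmetrically, a vertical edge $\{(i,j),(i+1,j)\}$ is monitored by the pair $(1,j),(m,j)$ of $B$ along column $j$. Since every edge of the grid is horizontal or vertical, $B$ is an MEG-set and $\MEG(G(m,n))\le |B| = 2(m+n-2)$.

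For the lower bound I would argue that every vertex of $B$ belongs to \emph{every} MEG-set; summing then yields $\MEG(G(m,n))\ge |B|$. Consider a top-row vertex $v=(1,j)$ (the three other sides are symmetric after swapping the roles of rows and columns, and the corners arise as the boundary cases $j\in\{1,n\}$), and let $e=\{(1,j),(2,j)\}$ be its downward edge. Suppose a pair $\{x,y\}$ monitors $e$; I claim $v\in\{x,y\}$. Since every shortest $x$–$y$ path uses $e$, one such path traverses it from $(1,j)$ to $(2,j)$, and a short triangle-inequality argument shows $(1,j)$ is \emph{strictly} closer to $x$ and $(2,j)$ strictly closer to $y$. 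Writing $x=(i_x,j_x)$, the inequality $d(x,(1,j))<d(x,(2,j))$ reads $(i_x-1)<|i_x-2|$, which forces $i_x=1$, so $x$ lies in the top row. The analogous inequality at $y$ forces $y$ to lie in some row $i_y\ge 2$. If now $j_x\neq j$, say $j_x<j$, then the first step $(1,j_x)\to(2,j_x)$ already lies on a shortest $x$–$y$ path and can be completed to a monotone shortest path staying entirely in rows $\ge 2$; such a path avoids $e$, contradicting that $e$ is on all shortest $x$–$y$ paths. Hence $j_x=j$ and $x=v$, as claimed.

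Consequently each boundary vertex is forced into every MEG-set, giving $\MEG(G(m,n))\ge 2(m+n-2)$ and matching the upper bound. The delicate part is precisely this last forcing argument: for each boundary vertex one must pick the right incident edge (the one pointing into the interior) and then verify both that any monitoring endpoint is pinned to the boundary row/column by the distance computation and that an \emph{alternative} shortest path into the interior exists which avoids the chosen edge. As a sanity check, for $m=n=2$ every vertex is on the boundary and the formula gives $4$, consistent with $G(2,2)=C_4$ and $\MEG(C_4)=4$ from Theorem~\ref{cyclet}.
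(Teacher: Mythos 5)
Your proof is correct and takes essentially the same approach as the paper: you show the boundary vertices form an MEG-set using the unique row/column shortest paths, and you force each boundary vertex into every MEG-set via its edge pointing into the interior. Your Manhattan-distance forcing argument is just a more detailed version of the paper's observation that any shortest path through that edge can be rerouted through row~2, so the two proofs match.
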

\begin{proof}
We claim that the set $S=\{(i,j)\in V(G(m,n)), ~i\in\{1,m\}~\text{and}~1\leq j\leq n~\text{or}~j\in\{1,n\}~\text{and}~1\leq i\leq m\}$ of $2(m+n-2)$ vertices of $G(m,n)$ that form the boundary vertices of the grid, form the only optimal MEG-set.

For the necessity side, let us assume that some vertex $v=(i,j)$ of $S$ is not part of the MEG-set. If $v$ is a corner vertex (without loss of generality say $v=(1,1)$, the two edges incident with $v$ are not monitored, as for any shortest path going through them, there is another one going through vertex $(2,2)$. If $v$ is not a corner vertex (without loss of generality say $v=(1,j)$ with $2\leq j\leq n-1$), then the edge $e$ between $v=(1,j)$ and $(2,j)$ is not monitored, indeed for any shortest path containing $e$, there is another one avoiding it, either going through vertex $(2,j-1)$ or through $(2,j+1)$.

To see that $S$ is an MEG-set, first see that each boundary edge is monitored by its endpoints. Next, consider an edge $e$ that is not a boundary edge, without loss of generality, $e$ is between $(i,j)$ and $(i+1,j)$. Then, it is monitored by $(1,j)$ and $(m,j)$, whose unique shortest path goes through $e$.  
\end{proof}

\subsection{Corona products}
The corona product $G \odot H$ of two graphs $G$ and $H$ is defined as the graph obtained by taking one copy of $G$ and $|V(G)|$ copies of $H$ and joining the $i^{th}$ vertex of $G$ to every vertex in the $i^{th}$ copy of $H$.

Let graph $G$ be simple and connected and graph $H$ be simple. Let $\{g_i\}$ and $\{h_i\}$ denote the vertex set of $G$ and $H$ respectively. Let $|V(G)| = n_1$ and $|V(H)| = n_2$.

\begin{theorem}
Given a simple, connected graph $G$ and a simple graph $H$, we have $meg(G \odot H) = n_1n_2$.
\end{theorem}
\begin{proof}
By definition of $G \odot H$, every vertex $g_i$ is a cut-vertex in it. Therefore, the edge $e = (g_i, g_j)$ lies in the unique shortest path connecting the $i^{th}$ copy $H_i$ with the $j^{th}$ copy $H_j$ in $G \odot H$. i.e. every edge of $G$ is in the unique shortest path of some $H_i$ and $H_j$. Hence no vertex of $G$ is in $meg(G \odot H)$.

If some vertex $h$ of $H_i$ is not in the MEG-set, consider an edge of $H_i$ incident with $h$, if that edge is monitored, then there are two vertices other than $h$ monitoring it; these two vertices must be in $H_i$ and at distance 2 from each other, but then there are two edge-disjoint paths of length 2 joining them (one through $h$ and one through $g_i$), a contradiction.
\end{proof}

\section{Relation to feedback edge set number}\label{sec:fes}
A feedback edge set of a graph $G$ is a set of edges which when removed from $G$ leaves a forest. The smallest size of such a feedback edge set of $G$ is denoted by $\fes(G)$ and is sometimes called the \emph{cyclomatic number} of $G$.

We next introduce the following terminology from \cite{r11}. A vertex is a \emph{core vertex} if it has degree at least~3. A path with all internal vertices of degree~2 and whose end-vertices are core vertices is called a \emph{core path}. Do note that we allow the two end-vertices to be equal, but that every other vertex must be distinct. A core path that is a cycle (that is, both end-vertices are equal) is a \emph{core cycle}. For the sake of distinction, a core path that is not a core cycle is called a \emph{proper core path}. We say that a (non-empty) path from a core vertex $u$ to a leaf $v$ is a \textit{leg} of $u$ if all internal vertices of the path have degree~2 ($u$ is not considered to be a part of the leg). The \emph{base graph} of a graph $G$ is the graph of minimum degree~2 obtained from $G$ by iteratively removing vertices of degree~1. A \emph{hanging tree} is a connected subtree of $G$ which is the union of some legs removed from $G$ during the process of creating the base graph $G_b$ of $G$, together with the single vertex of the base graph to which the last removed vertices were adjacent. Thus, $G$ can be decomposed into its base graph and a set of maximal hanging trees. The root of such a maximal hanging tree $T$ is the vertex common to $T$ and $G$.

See Figure~\ref{fig:examplegraph} for a graph whose core vertices are in red. It has two hanging trees, three core cycles, four proper core paths of length~4, and six proper core paths of length~1.

\begin{figure}[ht!]
    \centering
    \subfloat{{\includegraphics[width=0.8\textwidth]{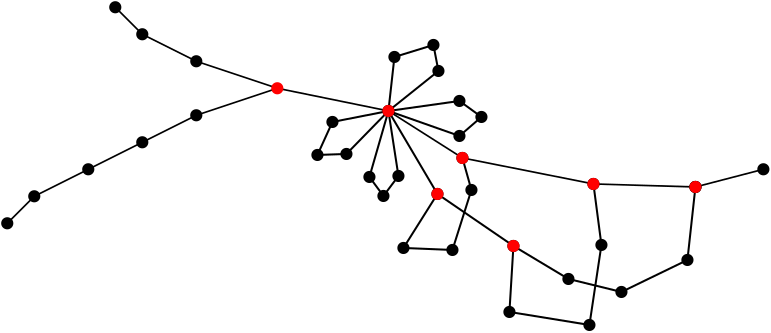} }}
    \caption{Example of a graph $G$ with its core vertices in red.}
    \label{fig:examplegraph}
\end{figure}

Based on the aforementioned, we have the following lemma.
\begin{lemma}[\cite{r11,r12}]\label{cores}
Let $G$ be a graph with $\fes(G) = k \geq 2$. The base graph of $G$ has at most $2k$ - $2$ core vertices, that are joined by at most $3k$ - $3$ edge-disjoint core paths. Equivalently, $G$ can be obtained from a multigraph $H$ of order at most $2k-2$ and size at most $3k-3$ by subdividing its edges an arbitrary number of times and iteratively adding degree~1 vertices.
\end{lemma}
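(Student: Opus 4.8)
The plan is to reduce $G$ to a multigraph supported on its core vertices by two ``degree-preserving'' operations, and then read off both bounds from the handshake inequality combined with the standard cyclomatic formula $\fes(G) = m - n + 1$ for a connected graph with $m$ edges and $n$ vertices. First I would observe that passing from $G$ to its base graph $G_b$ leaves the feedback edge set number unchanged: deleting a degree-1 vertex removes exactly one vertex and one edge, so $m - n$ is preserved, and the graph stays connected; hence $\fes(G_b) = \fes(G) = k$. Since $k \geq 2$, the connected min-degree-2 graph $G_b$ cannot be a single cycle (which would have $\fes = 1$), so $G_b$ has at least one core vertex.

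Next I would form a multigraph $H$ by suppressing every degree-2 vertex of $G_b$, that is, by replacing each maximal path whose internal vertices all have degree~2 by a single edge (a core cycle thereby becoming a loop). The vertices of $H$ are then exactly the core vertices of $G_b$, and each edge of $H$ corresponds to a core path. Crucially, this operation is again degree-preserving: suppressing one degree-2 vertex removes one vertex and one edge, so $\fes(H) = \fes(G_b) = k$, and every core vertex keeps its degree, so every vertex of $H$ has degree at least~3.

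The counting is then immediate. Writing $n_H = |V(H)|$ and $m_H = |E(H)|$, the handshake lemma gives $2 m_H = \sum_{v} \deg_H(v) \geq 3 n_H$, while connectivity of $H$ gives $m_H = n_H + k - 1$. Substituting the second relation into the first yields $n_H + k - 1 \geq \tfrac{3}{2} n_H$, hence $n_H \leq 2k - 2$, and then $m_H = n_H + k - 1 \leq 3k - 3$. This establishes both the bound on the number of core vertices and the bound on the number of edge-disjoint core paths. The equivalent reformulation follows by reversing the two operations: subdividing the edges of $H$ recovers the degree-2 vertices internal to the core paths, and iteratively reattaching degree-1 vertices recovers the hanging trees and legs that were stripped away in forming $G_b$.

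The step requiring the most care is the degree bookkeeping under suppression in the presence of loops and parallel edges. A core cycle produces a loop that contributes~2 to its core vertex's degree, and several core paths sharing the same pair of endpoints become parallel edges; I would need to confirm that $H$ is a genuine multigraph in which every vertex still has degree at least~3 and that the formula $\fes(H) = m_H - n_H + 1$ remains valid for multigraphs with loops. Once this is verified, the two inequalities above are entirely routine, so the real content of the argument lies in setting up the reduction to $H$ correctly rather than in the final arithmetic.
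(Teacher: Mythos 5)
Your proof is correct. Note that the paper does not prove this lemma at all---it is cited from the literature (the references \cite{r11,r12} in the bibliography)---and your argument is essentially the standard one given there: suppress degree-2 vertices of the base graph to obtain a multigraph $H$ on the core vertices, observe that both deleting degree-1 vertices and suppressing degree-2 vertices preserve $m-n$ and connectivity (hence the cyclomatic number), and combine the handshake inequality $2m_H\geq 3n_H$ with $m_H=n_H+k-1$ to get $n_H\leq 2k-2$ and $m_H\leq 3k-3$. The loop/parallel-edge bookkeeping you flag does go through: loops arise only at core vertices (a degree-2 vertex carrying a loop would form its own component, contradicting connectivity to a core vertex, which exists since $k\geq 2$ rules out $G_b$ being a cycle), loops count twice in the degree sum and once in the edge count, and $\fes(H)=m_H-n_H+1$ remains valid for connected multigraphs with loops since a spanning tree still has $n_H-1$ edges.
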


\begin{lemma}\label{base}
Let $S$ be an MEG-set of the base graph $G_b$ of $G$ and $L(G)$ be the set of leaves in $G$. Then, $S \cup L(G)$ is an MEG-set of $G$.
\end{lemma}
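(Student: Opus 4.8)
The plan is to partition the edges of $G$ into the edges of the base graph $G_b$ and the edges lying in the hanging trees, and to monitor each group separately. The structural fact driving everything is that each maximal hanging tree $T$ meets the rest of $G$ in exactly its root $r$ (a single vertex) and is itself acyclic; hence $T$ contributes no edge to $G_b$, shares with $G_b$ only the vertex $r$, and each of its edges is a bridge of $G$. I would first record these consequences, as both halves of the argument rely on them, and also observe that $E(G_b)$ together with the edge sets of the hanging trees form a partition of $E(G)$.

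For the edges of $G_b$, I would invoke Lemma~\ref{subgr} with $H = G_b$. Since iteratively removing degree-1 vertices never deletes an edge between two surviving vertices, $G_b$ is an induced subgraph of $G$. It then remains to check the hypothesis of Lemma~\ref{subgr}: that no shortest path between two vertices $x,y \in V(G_b)$ uses a hanging-tree edge. This holds because each hanging-tree edge is a bridge whose ``far'' side lies strictly inside a single hanging tree and therefore contains neither $x$ nor $y$; a simple path between two vertices on the same side of a bridge cannot traverse that bridge. (Equivalently, entering a hanging tree and returning would force a shortest path to revisit the root $r$.) Hence the shortest $x$--$y$ paths in $G$ coincide with those in $G_b$, and since $S \cup L(G)$ contains the MEG-set $S$ of $G_b$, Lemma~\ref{subgr} yields that all edges of $G_b$ are monitored.

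For a hanging-tree edge $e = ab$, with $b$ on the side away from the root, I would exploit that $e$ is a bridge. Let $B$ be the component of $G - e$ containing $b$; it is a subtree of the hanging tree, so it contains a leaf $\ell$ of $G$, and $\ell \in L(G) \subseteq S \cup L(G)$. On the other side lies all of $V(G_b)$, and hence all of $S$; assuming $G_b$ is nonempty, $S$ is nonempty (as $G_b$ has minimum degree~2 and thus an edge), so I pick any $z \in S$. Since $e$ is a bridge separating $\ell$ from $z$, every path --- and in particular every shortest path --- between $\ell$ and $z$ uses $e$, so $\{\ell, z\}$ monitors $e$. The degenerate case $G_b = \emptyset$ is exactly when $G$ is a tree, where $S = \emptyset$ and $S \cup L(G) = L(G)$ is an MEG-set by Theorem~\ref{thm:trees}.

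The main obstacle is not the monitoring arguments themselves, which are short once the bridge structure is in hand, but rather pinning down the decomposition cleanly: verifying that a maximal hanging tree attaches to $G_b$ at a single root (so that its edges are genuinely bridges and base-graph shortest paths stay inside $G_b$), and that the edge sets of $G_b$ and of the hanging trees indeed partition $E(G)$. I expect this bookkeeping, rather than any delicate distance computation, to be where care is needed.
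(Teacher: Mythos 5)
Your proof is correct, but it follows a genuinely different route from the paper's. The paper disposes of this lemma in three lines by repeatedly applying the cut-vertex lemma (Lemma~\ref{cutv}): the root of each maximal hanging tree is a cut-vertex, each hanging tree together with its root is a tree whose optimal MEG-set is its leaf set (Theorem~\ref{thm:trees}), and Lemma~\ref{cutv} then composes these leaf sets with the MEG-set $S$ of $G_b$ to give $S \cup L(G)$. You instead partition $E(G)$ into base-graph edges and hanging-tree edges: the former are handled by Lemma~\ref{subgr} with $H = G_b$, whose hypothesis you verify from the fact that every hanging-tree edge is a bridge whose far side contains no vertex of $V(G_b)$; the latter are handled by an elementary separation argument pairing a far-side leaf of $G$ with an arbitrary vertex of $S$ (so you only need Theorem~\ref{thm:trees} in the degenerate case $G_b = \emptyset$). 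Both arguments rest on the same structural picture --- each hanging tree meets $G_b$ in a single root --- but they invoke different tools: Lemma~\ref{cutv} versus Lemma~\ref{subgr}. What your version buys is explicitness: it spells out the bridge and single-root-attachment facts, and the nonemptiness of $S$, which the paper's ``repeatedly apply Lemma~\ref{cutv}'' leaves implicit (including the induction implicit in ``repeatedly'' and the fact that the hanging trees' leaves are exactly leaves of $G$). What the paper's version buys is brevity and modularity, reusing already-proved composition machinery rather than re-deriving separation properties of bridges. Your identified ``obstacle'' (the bookkeeping that hanging trees attach at a single root and that the edge sets partition $E(G)$) is indeed the right thing to pin down, and your sketched justifications for it are sound.
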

\begin{proof}
Let $G_b$ be a base graph of $G$. Consider all vertices that are roots of maximal hanging trees on $G_b$. By Theorem~\ref{thm:trees}, the optimal MEG-set of each tree consists of all leaves. We repeatedly apply Lemma~\ref{cutv} to $G$, where for each application of Lemma~\ref{cutv}, the cut-vertex is the root of a hanging tree in consideration.
\end{proof}

Lemma~\ref{leaf-node}, Theorem~\ref{tt} and Lemma~\ref{base} together imply that if $\fes(G) = 0$, then $\MEG(G) \leq \fes(G)+|L(G)|$. Moreover, if $\fes(G)=1$, then $\MEG(G) \leq \fes(G) + |L(G)| + 3$, where $|L(G)|$ is the number of leaves of $G$. We next give a similar bound when $\fes(G)\geq 2$.

\begin{figure}[ht!]
    \centering
    \subfloat{{\includegraphics[width=0.35\textwidth]{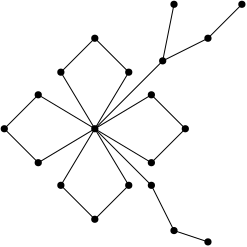} }}
    \qquad
    \subfloat{{\includegraphics[width=0.20\textwidth]{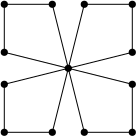} }}
    \caption{Example of a graph $G$ and its base graph $G_b$ with four core cycles of length~4 each.}
    \label{fig:corecycle}
\end{figure}

\begin{theorem}\label{thm:fes}
If $\fes(G) \geq 2$, then $\MEG(G) \leq 9\fes(G) + |L(G)| - 8$ where $|L(G)|$ is the number of leaves of $G$.
\end{theorem}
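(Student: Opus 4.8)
The plan is to reduce the problem to the base graph $G_b$ of $G$ and then build an explicit MEG-set of $G_b$ whose size is controlled by $\fes(G)$. By Lemma~\ref{base}, if $S$ is any MEG-set of $G_b$ then $S\cup L(G)$ is an MEG-set of $G$, so $\MEG(G)\le \MEG(G_b)+|L(G)|$ (the leaves of $G$ are deleted in forming $G_b$, so the two sets are disjoint). It therefore suffices to prove $\MEG(G_b)\le 9k-8$, where $k=\fes(G)=\fes(G_b)$. Since $k\ge 2$ and $G_b$ has minimum degree~$2$, Lemma~\ref{cores} lets me decompose $G_b$ into at most $2k-2$ core vertices joined by at most $3k-3$ edge-disjoint core paths (proper core paths and core cycles), and these core paths partition $E(G_b)$. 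I will put all core vertices into $S$ and then add a small number of degree-$2$ vertices on each core path.

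The key monitoring tool is a local analogue of the cycle argument of Theorem~\ref{cyclet}. Consider a proper core path $P$ from $u$ to $w$ of length $m$, with internal vertices $p_1,\dots,p_{m-1}$, and let $r$ be the length of a shortest $u$--$w$ path avoiding the interior of $P$ (set $r=\infty$ if $P$ is a bridge). Because every interior vertex of $P$ has degree~$2$, any path leaving $P$ must pass through $u$ or $w$; hence for $0\le a<b\le m$ the only competitor to the sub-path $p_a\cdots p_b$ (of length $b-a$) is the detour of length $(m-(b-a))+r$ through the outside. Thus $p_a\cdots p_b$ is the unique shortest $p_a$--$p_b$ path in $G_b$, and therefore monitors each of its edges, precisely when $b-a<(m+r)/2$, i.e.\ when its length is less than half the length $m+r$ of the shortest cycle through those edges. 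For a core cycle $C$ of length $m$ attached at the core vertex $u$, the same reasoning shows that an arc of $C$ is the unique shortest path between its endpoints exactly when its length is less than $m/2$. I will choose the extra vertices on each core path so that every edge lies on such a short, uniquely shortest sub-path or arc.

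Next I bound the number of added vertices per core path, mirroring the trisection used for cycles. On a proper core path it suffices to cut $P$ into consecutive pieces each of length less than $(m+r)/2$; since $r\ge 1$, a short computation shows at most three pieces are ever needed, hence at most two interior cut vertices (the two endpoints being the core vertices $u,w$, already selected). On a core cycle of length $m$, placing two further vertices near the points at distance $\lfloor m/3\rfloor$ and $\lfloor 2m/3\rfloor$ from $u$ makes every arc shorter than $m/2$ once $m\ge 5$ or $m=3$, costing two interior vertices; the sole exception is $m=4$, where (exactly as for $C_4$ in Theorem~\ref{cyclet}) all four vertices are forced, costing three interior vertices. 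Thus each proper core path contributes at most $2$ new vertices and each core cycle at most $3$.

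Summing, $|S|\le(2k-2)+2\cdot(\#\text{core paths})+(\#\text{core cycles})$, where the last term accounts for the single extra vertex that a $4$-core-cycle may demand over the generic budget of two. Using $\#\text{core paths}\le 3k-3$ from Lemma~\ref{cores} and $\#\text{core cycles}\le k$ (edge-disjoint cycles are independent in the cycle space, whose dimension is $\fes(G_b)=k$), this gives $|S|\le (2k-2)+2(3k-3)+k=9k-8$, and combining with the reduction above yields $\MEG(G)\le 9k-8+|L(G)|$. The main obstacle I expect is the rigorous verification of the unique-shortest-path claims: one must check that the detour analysis really captures all competing geodesics through the rest of $G_b$ (including the degenerate cases $a=0$, $b=m$, bridges with $r=\infty$, and the short cycles $C_3,C_4$), and that the piece-counting survives every small-length corner case. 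Once these local facts are secured, the global count is immediate from Lemmas~\ref{base} and~\ref{cores}.
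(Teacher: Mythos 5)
Your proposal is correct and follows essentially the same route as the paper: reduce to the base graph via Lemma~\ref{base}, decompose it via Lemma~\ref{cores}, take all core vertices plus at most two interior vertices per proper core path and at most three per core cycle, and count $(2k-2)+2(3k-3)+k=9k-8$. Your explicit detour analysis with the parameter $r$ (and the trisection on core cycles, with the $C_4$ exception) is a slightly cleaner and more rigorous version of the paper's median-vertex argument, but the decomposition, the per-path/per-cycle budgets, and the final count are identical.
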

\begin{proof}
Let $k = \fes(G)$. We show how to construct a MEG-set $M$ of $G_b$ of order at most $9k-8$ and, by applying Lemma~\ref{base} to $G$, of order $9k-8 + |L(G)|$ for $G$. If an edge $e$ is part of a maximal hanging tree, then by Lemma~\ref{leaf-node} and Lemma~\ref{cutv}, it is monitored by the leaves of $G$ on the maximal hanging tree. $M$ is constructed as follows. 
\begin{itemize}
    \item We let all core vertices of $G_b$ be part of $M$.
    \item One or two internal vertices from each proper core path belongs to $M$, only if the length is at least~2, as explained below.
    \item Two or three internal vertices from each core cycle, as explained below.
\end{itemize}

Consider a proper core path $P$ of length at least~2, with core vertex endpoints $c$ and $c'$, and the median vertex $x_1$ in the case of an even-length path and $x_1$, $x_2$ in the case of an odd-length path, with $d$ edges (on $P$) between the endpoints and the respective medians in $P$. Then, we choose the single median vertex $x_1$ or the two median vertices $x_1$, $x_2$ from each of the core paths and add them to $M$.

For each core cycle $C$, in addition to the core vertex of that cycle, if $C$ has length~4, we add all three non-core vertices of $C$ to $M$. Otherwise, we add two non-core vertices of $C$ to $M$, so that now $C$ has three vertices in $M$. We do this so that these three vertices are as equidistant as possible on the cycle, to be part of $M$ (similar as in Theorem~\ref{cyclet}). 

This finishes the description of the construction of $M$.

\medskip

We now show that $M$ monitors all edges of $G_b$. Let $e$ be any edge of $G$. If $e$ lies on a core cycle $C$, assume an origin core vertex of $v_0$. Then, based on Lemma~\ref{cutv} and Theorem~\ref{cyclet}, we deduce that in the worst case, $v_0$ and the two or three other vertices of $C$ in $M$ together suffice to monitor the edges.

If the edge $e$ lies on a proper core path $P$, then let $c$ and $c'$ be the core vertex endpoints of $P$. Let the median vertex of $P$ be $x_1$ in the case of an even-length path and $x_1$, $x_2$ the two medians in the case of an odd-length path. Assume that there are $d$ edges between the medians and closest endpoints of $P$. Without loss of generality, let us say that $e$ lies on the path $P$ such that its closest core vertex is $c$, and closest median $x_1$. 
Given that the distance between $c$ and $x_1$ is $d$ in $P$, the length of any other path between them must be at least $d+1$ (or $d+2$ if $P$ has odd length). Therefore, $c$ and $x_1$ monitor $e$, which lies on the unique shortest path of length~$d$ between them. 
If $e$ lies in between the median vertices $x_1$ and $x_2$, then we know that those vertices would monitor $e$ because they are adjacent. 
This justifies our construction of $M$, which is an MEG-set of $G_b$, as claimed.

\medskip

Let us now estimate the size of $M$. By Lemma~\ref{cores}, the number of core vertices of $G_b$ is at most $2k-2$, and there are at most $3k-3$ core paths.

If we have core cycles in our graph, then we must note that there can be at most $k$ such cycles in the graph. Indeed, if there were $k+1$ core cycles in the graph, since they are all edge-disjoint, we need at least $k+1$ edges to be removed from $G$ to obtain a forest, a contradiction to the fact that $\fes(G)=k$.

Let $n_c$ be the number of core cycles and $n_p$ be the number of proper core paths. Recall that after adding the core vertices to $M$, we added at most three vertices of each core cycle and two for each core path to $M$. Hence, we have $\abs{M} \leq 3n_c + 2n_p + 2k-2$. Since $n_c \leq k$ and $n_c + n_p \leq 3k-3$ by Lemma~\ref{cores}, we get $\abs{M} \leq 3k + 2(2k-3) + 2k-2 = 9k-8$.

Also note that this value is only reached if all core cycles are of length~4 and all core paths are of odd length.
\end{proof}

The \emph{max leaf number} of $G$, denoted $\MLN(G)$, is the maximum number of leaves in a spanning tree of $G$. It can be seen as a refinement of the feedback edge set number of $G$~\cite{E15}. We get the following corollary.

\begin{corollary}\label{cor:MLN}
For any graph $G$, we have $\MEG(G)= O(\MLN(G)^2)$, where $\MLN(G)$ is the max leaf number of $G$.
\end{corollary}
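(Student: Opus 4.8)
The plan is to deduce this corollary from Theorem~\ref{thm:fes} by relating $\fes(G)$ and $|L(G)|$ to $\MLN(G)$. The key conceptual point is that $\MLN$ is known to upper-bound both of these quantities (up to constants), so a linear combination of them will collapse into a single multiple of $\MLN(G)$. Concretely, I would first recall the standard fact (see~\cite{E15}) that the feedback edge set number is bounded in terms of the max leaf number: a graph with many independent cycles forces a spanning tree with many leaves. The cleanest form I would invoke is $\fes(G) \leq \MLN(G)$, which follows because one can extract a spanning tree whose number of leaves is at least the cyclomatic number (each extra non-tree edge can be charged to a leaf in a suitably chosen spanning tree).

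Next I would bound the number of leaves $|L(G)|$ of $G$ itself by $\MLN(G)$. This is essentially immediate from the definition: every leaf of $G$ is a vertex of degree~1, and such a vertex is forced to be a leaf in \emph{every} spanning tree of $G$, in particular in a spanning tree achieving the maximum; hence $|L(G)| \leq \MLN(G)$. Combining these two bounds with Theorem~\ref{thm:fes} gives, for $\fes(G)\geq 2$,
\begin{equation*}
\MEG(G) \leq 9\fes(G) + |L(G)| - 8 \leq 9\MLN(G) + \MLN(G) - 8 \leq 10\MLN(G),
\end{equation*}
which is exactly the claimed inequality.

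Finally I would dispatch the small cases $\fes(G)\in\{0,1\}$ separately, since Theorem~\ref{thm:fes} only applies when $\fes(G)\geq 2$. For these, the remarks in the paragraph preceding Theorem~\ref{thm:fes} give $\MEG(G)\leq \fes(G)+|L(G)|$ when $\fes(G)=0$ and $\MEG(G)\leq \fes(G)+|L(G)|+3$ when $\fes(G)=1$; in both cases one checks that the right-hand side is at most $10\MLN(G)$, using $|L(G)|\leq \MLN(G)$ and the fact that any graph with at least one edge has $\MLN(G)\geq 2$ (the additive constant~$3$ is absorbed because a graph with a cycle admits a spanning tree with at least two leaves).

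The main obstacle I anticipate is pinning down the precise inequality $\fes(G)\leq\MLN(G)$ with a clean justification rather than a hand-wave, since the relationship between the cyclomatic number and the max leaf number is the only nontrivial ingredient; the leaf-count bound and the arithmetic are routine. One must be a little careful that the constant genuinely works in the boundary regime (small $\fes$, few leaves), which is why the separate treatment of $\fes(G)\in\{0,1\}$ is needed, but the dominant term $9\fes(G)\leq 9\MLN(G)$ together with $|L(G)|\leq\MLN(G)$ drives the bound.
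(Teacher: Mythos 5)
Your proposal is correct and follows essentially the same route as the paper: invoke $\fes(G)\leq\MLN(G)$ from~\cite{E15}, observe $|L(G)|\leq\MLN(G)$, and plug both into Theorem~\ref{thm:fes}. In fact you are slightly more careful than the paper's one-line proof, which silently ignores the cases $\fes(G)\in\{0,1\}$ not covered by Theorem~\ref{thm:fes}; your explicit treatment of those cases via the bounds stated before the theorem is a welcome (and needed) addition, not a different approach.
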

\begin{proof}
It is known that $\fes(G)= O(\MLN(G)^2)$~\cite{E15}, and clearly, $|L(G)|\leq\MLN(G)$, thus the bound follows from Theorem~\ref{thm:fes}.
\end{proof}

In the following, we show that Theorem~\ref{thm:fes} is best possible up to constant factors.

\begin{proposition}\label{cex}
For any integer $k \geq 2$, there exists a graph $G$ with $\fes(G) = k$ and $\MEG(G) = 3k + \abs{L(G)}$.
\end{proposition}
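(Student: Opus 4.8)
The plan is to exhibit, for each $k \ge 2$, an explicit graph built out of $k$ ``expensive'' four-cycles and then verify two matching bounds. The reason I would use four-cycles rather than longer cycles is that $C_4$ is the smallest cycle whose antipodal vertex is joined to a given vertex by \emph{two} shortest paths; this doubling is exactly what prevents an external probe from cheaply monitoring the cycle from outside, and it is what forces three vertices per cycle. (By contrast, pendant pentagons can each be monitored with only two vertices, a probe from another part of the graph supplying the ``third point'' through the attachment vertex, which would yield only $2k$.)

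\emph{The construction.} I would take a central vertex $w$ together with $k$ vertex-disjoint copies of $C_4$, writing the $i$-th copy as $u_0^i u_1^i u_2^i u_3^i$ and joining it to the rest by the single bridge $w u_0^i$. Then $G$ is connected with $4k+1$ vertices and $5k$ edges, so $\fes(G) = |E(G)|-|V(G)|+1 = k$, and every vertex has degree at least~$2$, so $L(G) = \emptyset$ and the target value is $3k + |L(G)| = 3k$. (If a version with $|L(G)|>0$ is wanted, one simply hangs pending paths off the cycles, which changes neither $\fes$ nor the cycle-monitoring analysis below.)

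\emph{Upper bound.} I would check that $S = \{u_1^i, u_2^i, u_3^i : 1 \le i \le k\}$, of size $3k$, is an MEG-set. Within copy~$i$ the edges $u_1^i u_2^i$ and $u_2^i u_3^i$ are monitored by their adjacent endpoints. For the two edges incident to $u_0^i$ and for the bridge $w u_0^i$, I would use a probe from another copy, say $u_1^j$ with $j\neq i$, which exists precisely because $k\ge 2$: the path $u_1^i u_0^i w u_0^j u_1^j$ is the unique shortest $u_1^i$--$u_1^j$ path, so that pair monitors $u_0^i u_1^i$ and both bridges at once, and symmetrically $\{u_3^i, u_1^j\}$ monitors $u_0^i u_3^i$.

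\emph{Lower bound (the crux).} The key claim is that in each copy the edge $u_1^i u_2^i$ is monitored \emph{only} by the pair $\{u_1^i, u_2^i\}$, and likewise $u_2^i u_3^i$ only by $\{u_2^i, u_3^i\}$; hence $\{u_1^i, u_2^i, u_3^i\}$ is contained in every MEG-set. To prove the claim I would note that every vertex outside $\{u_1^i,u_2^i,u_3^i\}$ reaches $u_2^i$ through $u_0^i$, and from $u_0^i$ there are two shortest paths to $u_2^i$ (via $u_1^i$ and via $u_3^i$); so no pair with such an endpoint, nor any pair traversing $u_2^i$ rather than ending at it (since $u_1^i u_2^i u_3^i$ has the parallel route $u_1^i u_0^i u_3^i$ of equal length), can have $u_1^i u_2^i$ on all of its shortest paths. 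This leaves only $\{u_1^i, u_2^i\}$, and the reflection swapping $u_1^i \leftrightarrow u_3^i$ gives the companion statement. Since the sets $\{u_1^i, u_2^i, u_3^i\}$ are pairwise disjoint, summing over the $k$ copies yields $\MEG(G) \ge 3k$, matching the upper bound, so $\MEG(G) = 3k = 3k+|L(G)|$. I expect the only delicate step to be this ``only monitoring pair'' argument, i.e. ruling out that a distant probe or a bridge-straddling pair could monitor an interior $C_4$-edge; everything else reduces to routine distance bookkeeping.
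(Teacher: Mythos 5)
Your proof is correct and takes essentially the same approach as the paper: both constructions glue $k$ four-cycles to a single central vertex (the paper's cycles share the centre and carry hanging trees contributing $L(G)$, yours hang by bridges with $L(G)=\emptyset$) and both exploit the fact that each $C_4$ forces its three non-attachment vertices into every MEG-set. Your explicit lower-bound argument---showing that $u_1^iu_2^i$ is monitored \emph{only} by the pair $\{u_1^i,u_2^i\}$ because every outside vertex reaches $u_2^i$ through $u_0^i$ along two equal-length routes---actually supplies the verification that the paper leaves as ``easy to check.''
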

\begin{proof}
Consider $G$ and its base graph $G_b$ in Figure~\ref{fig:corecycle}. We know that the leaves must be part of any MEG-set by Lemma~\ref{leaf-node}. The MEG-set for $G_b$ consists of all the vertices in each of the core cycles (each a $C_4$) in $G_b$, except the common core vertex. It is easy to check that no smaller set can work. The size of the optimal MEG-set in this example is $3k + |L(G)|$ and therefore, this is an instance where this proposition holds.
\end{proof}

\section{NP-completeness for graphs of small maximum degree}\label{sec:NPc}

The \textsc{Monitoring Edge Geodetic Set} decision problem is defined as follows.

\decisionpb{\textsc{Monitoring Edge Geodetic Set}}{A graph $G=(V(G),E(G))$ and an integer $k$.}{Is there an \emph{MEG-set} $S \subseteq V(G)$ of $G$ of size at most $k$?}{.9}

In this section, we show that the problem is NP-hard by a reduction from \textsc{Vertex Cover}.

\decisionpb{\textsc{Vertex Cover}}{A graph $G=(V(G),E(G))$ and an integer $k$.}{Is there a vertex cover $C \subseteq V(G)$ of size at most $k$ such that every edge in $E(G)$ is incident with some vertex in $C$?}{0.9}

\paragraph{Overview of the reduction.} Consider a cubic planar graph. We subdivide each edge of this graph four times. Let this graph be $G$ with vertex set $V(G)$ and edge set $E(G)$. Also, let $|V(G)| = n$. Observe that the vertices of $G$ have degree 2 or 3 and its girth is at least 15. It is well know that \textsc{Vertex Cover} is NP-hard when the input graph is cubic, see for example~\cite{mohar2001face}. We show that it is also NP-hard when the input graph is restricted to resemble graphs such as $G$. (Note that such a result seems to be already known for \textsc{Independent Set}~\cite{alekseev1982effect}, which has the same complexity as \textsc{Vertex Cover}. We state the proof for \textsc{Vertex Cover} for completeness, as~\cite{alekseev1982effect} is in Russian.)

Then, given $G$, we construct a new graph $H_G$ of maximum degree 8 and show that $G$ has a vertex cover of size $k$ if and only if $H_G$ has an MEG-set of size $3n +k$. This completes our proof.

\begin{lemma}[\cite{alekseev1982effect}] \label{lem:vertex_cover}
\textsc{Vertex Cover} is NP-complete, even for input graphs obtained from a cubic graph by subdividing every edge four times.
\end{lemma}

\begin{proof}
Let $G'$ be the original (cubic planar) graph before subdivision and let $|E(G')| $ $ = m'$. To show that finding the minimum vertex cover in $G$ is NP-hard it suffices to show that $G'$ has a vertex cover of size $k$ if and only if $G$ has a vertex cover of size $2m' + k$.

\medskip

    \textbf{(The if part.)} Let $C$ be a vertex cover of $G$ of size $2m' + k$. We contruct a vertex cover $C'$ of $G'$ of size at most $k$ as follows. Consider all \emph{maximal paths} of $G$ of length 5 whose end points are degree 3 vertices and whose internal points are degree 2 vertices. Let $\pi_{uv}, u,v \in V(G')$, represent one such path. Observe that $\pi_{uv}$ which is in $G$, represents the edge $uv$ in $G'$. Since $C$ is a vertex cover, all edges in $\pi_{uv}$ are covered by some vertex in $C$. We add to $C'$ the vertex $u$ and/or $v$ if it was included in $C$. If both $u$ and $v$ were not in $C$ then we arbitrarily choose one of these two vertices and add it to $C'$.
    
    Since for each edge of $G'$ we have included one of its end points, $C'$ is a vertex cover of $G$. Also, $\pi_{uv}$ needs at least 3 vertices to cover all its edges. Therefore, if one of $u$ or $v$ is inserted into $C'$, two other vertices of $\pi_{uv}$, which are in $C$, are excluded. Again, if $u$ and $v$ both belong to $C$ then $\pi_{uv}$ needs at least 4 vertices to cover all its edges. Therefore, if both $u$ and $v$ are inserted into $C'$, two other vertices of $\pi_{uv}$, which are in $C$, are excluded. See Figure \ref{fig:maximal_path} for reference. Since for every such maximal path we excluded at least 2 vertices from $C$ from being inserted into $C'$, the size of $C'$ is at most $k$.

    \begin{figure}[!ht]
        \centering
        \includegraphics[width = .5 \textwidth]{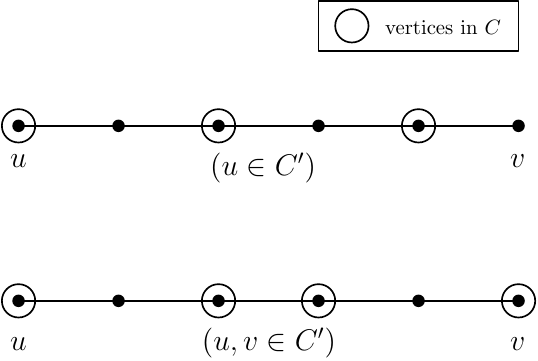}
        \caption{Choice of $C$ when $u \in C'$ and $u,v \in C'$.} \label{fig:maximal_path}
    \end{figure}

    \smallskip \noindent \textbf{(The only if part.)} Let $C'$ be a vertex cover of $G'$ of size $k$. We construct a vertex cover of $G$ of size at most $2m' + k$ as follows. As before, consider all maximal paths of $G$ and let $\pi_{uv}, u,v \in V(G')$, be one such path. Either $u$ or $v$  need to be in $C'$. We insert $u$ or $v$ or both into $C$ if they belong to $C'$ and are not already inserted. We select two additional vertices of $\pi_{uv}$ such that all its edges are covered and insert them into $C'$. As evident from Figure \ref{fig:maximal_path} this is always possible. $C$ is a vertex cover of $G$. Since for each maximal path $\pi_{uv}$ we insert two additional vertices into $C$, the size of $C$ is at most $2m' + k$.
\end{proof}

Now we show how to construct $H_G$ from $G$.

\paragraph{Construction.} For each degree 3 vertex $u \in V(G)$, we construct a \emph{vertex-gadget} $H_G^u$, with 16 vertices and 18 edges as shown in Figure~\ref{fig:vertex_gadget_3}. Each vertex $f_i^u, i \in \{1,2,3\},$ is associated with a unique side of the triangle $\triangle c_1^u c_2^u c_3^u$ as depicted in the figure. For example, $f_1^u$ is associated with the side $\overline{c_1^u d_1^u c_2^u}$. Each $f_i^u$ also has a unique \emph{associate vertex} $c_i^u$. Note that $c_i^u$ belongs to the side of $\triangle c_1^u c_2^u c_3^u$ associated with $f_i^u$ and its distance from $a^u$ is 2. For a degree 2 vertex $v \in V(G)$, we construct a \emph{vertex-gadget} $H_G^v$, with 14 vertices and 15 edges as shown in Figure~\ref{fig:vertex_gadget_2}. Again, each $f_i^v, i \in \{1,2\}$, vertex is associated with a unique side of the triangle $\triangle c_1^v c_2^v c_3^v$ depicted in the figure. In this case however, the assosiate vertex of $f_1^v$ is $c_1^v$ and that of $f_2^v$ is $c_3^v$. Again note that the associate vertex of $f_i^v$ belongs to the side of $\triangle c_1^v c_2^v c_3^v$ associated with $f_i^u$ and its distance from $a^v$ is 2. Also, observe that the missing elements in $H_G^v$, corresponding to $H_G^u$, are the vertices $b_2^v$ and $f_3^v$ and their incident edges.

\begin{figure}[!ht]
    \centering
    \begin{subfigure}{.45 \textwidth}
        \centering
        \includegraphics[width = \textwidth]{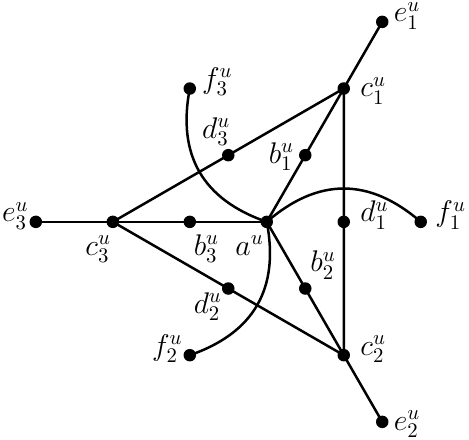}
        \caption{vertex-gadget for a degree 3 vertex.} \label{fig:vertex_gadget_3}
    \end{subfigure}
    \hfil
    \begin{subfigure}{.45 \textwidth}
        \centering
        \includegraphics[width = \textwidth]{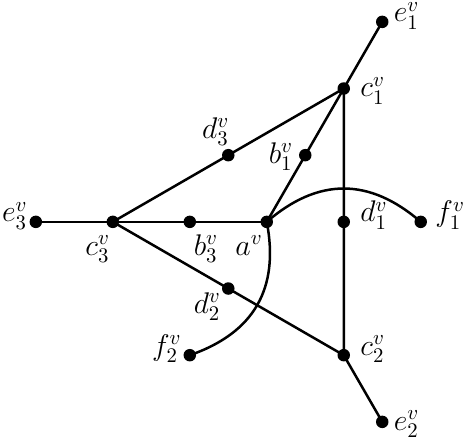}
        \caption{vertex-gadget for a degree 2 vertex.} \label{fig:vertex_gadget_2}
    \end{subfigure}
    \caption{The vertex-gadgets.} \label{fig:vertex_gadgets}
\end{figure}

Let $u$ be a degree 3 vertex in $G$ and let $v \in V(G)$ be such that $uv \in E(G)$. According to our construction of $G$, $v$ is always of degree 2. We connect the vertices of $H_G^u$ and $H_G^v$ using two vertices ($g_1^{uv}$ and $g_2^{uv}$) and five edges as shown in Figure \ref{fig:mini_edge_gadget}. Observe that $H_G^u$ and $H_G^v$ are connected to each other via vertices $f_1^u$ and $f_2^v$, thus $g_1^{uv}$ and $g_2^{uv}$ are connected to $f_1^u$'s and $f_2^v$'s associated sides. Since $u$ is of degree 3 in $G$, there are also similar connections between $H_G^u$ and two additional vertex-gadgets which are shown in the figure. One of them is labelled $H_G^t$, the other is not labelled. 

\begin{figure}[!ht]
    \centering
    \includegraphics[width =.6 \textwidth]{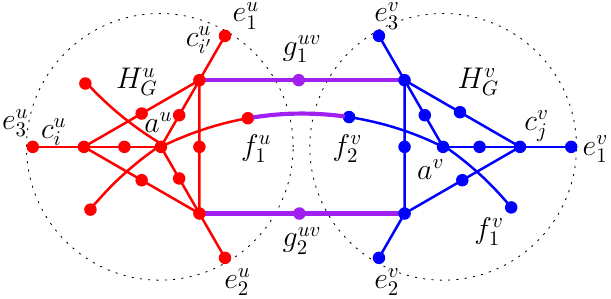}
    \caption{Connections between vertex-gadgets $H^u_G$ and $H^v_G$ (shown in the dashed circles), where $u$ is a degree~3 vertex and $v$ is a degree~2 vertex in $G$.}
    \label{fig:mini_edge_gadget}
\end{figure}

For a degree 2 vertex $v$ of $G$, the connections are similar with the only exception being that, instead of three, the vertex-gadget is connected to only two other vertex-gadgets. This means that $c_1^v$ and $c_3^v$ are both adjacent to one vertex of type $g_i^{xv}, i \in \{1,2\}, x \in V(G),$ while $c_2^v$, similar to the $c_j^u$'s ($u$ is of degree 3 in $G$), is adjacent to two such vertices.

Next, consider any two vertices $u, w \in V(G)$ such that the distance between them in $G$ is 2. Let us again assume $u$ to be of degree 3 in $G$. By the structure of $G$, this implies $w$ to be of degree~2. Let $v \in V(G)$ be the unique vertex adjacent to both $u$ and $w$. Let $f_{i'}^u$ (resp. $f_{j'}^w$) be the vertex in $H_G^u$ (resp. $H_G^w$) which is connected to $H_G^v$. Let $c_{i'}^u$ be the associate vertex of $f_{i'}^u$ and $c_h^w$ the associate vertex of $f_{j'}^w$. We connect $c_{i'}^u$ and $c_h^w$. This ensures that the length of the (unique) shortest path between $a^u$ and $a^w$ in $H_G$ is 5 and it passes through the edge $c_{i'}^u c_h^w$.

Now consider two vertices $t, w \in V(G)$ such that the distance between them in $G$ is 3. Let $u, v \in V(G)$ be the two vertices included in the unique shortest path between $t$ and $w$ in $G$. Also let $u$ be adjacent to $t$ and $v$ to $w$ in $G$. Let $f_{i'}^t$ (resp. $f_{j'}^w$) be the vertex in $H_G^t$ (resp. $H_G^w$) which is connected to $H_G^u$ (resp. $H_G^v$). Let $c_g^t$ and $c_h^w$ be the associate vertices of $f_{i'}^t$ and $f_{j'}^w$, respectively. We connect $c_g^t$ and $c_h^w$, such that the length of the (unique) shortest path between $a^t$ and $a^w$ is 5 and it passes through the edge $c_g^t c_h^w$. See~Figure~\ref{fig:edge_gadget} for reference.

\paragraph{Description of the various gadgets and connections.} In Figure~\ref{fig:edge_gadget}, the vertex-gadget colored \textcolor{red}{\textbf{red ($u$)}} is corresponding to a degree $3$ vertex. The vertex-gadgets adjacent to $u$ (which are all of degree $2$) are colored in \textcolor{ForestGreen}{\textbf{forest-green}},  \textcolor{blue}{\textbf{blue ($v$)}} and \textcolor{pink}{\textbf{pink ($t$)}}, respectively. The edges connecting $H^u_G$ and $H^v_G$ are shown in \textcolor{Plum}{\textbf{violet}}. The edges connecting $H^u_G$ and $H^t_G$ are shown in \textcolor{purple}{\textbf{purple}}. The vertex-gadget colored \textcolor{Goldenrod}{\textbf{yellow ($w$)}} is at a distance $2$ from $u$ and adjacent to $v$. The edges shown in \textcolor{green}{\textbf{green}} connect the gadgets of $v$ and $w$. The edges connecting $u$ and $w$ are highlighted in \textcolor{orange}{\textbf{orange}}. All such other connections have been highlighted in different colors for ease of understanding. The edges of the vertex-gadgets are thinner than the edges connecting the gadgets.

\begin{figure}[!ht]
    \centering
    \includegraphics[width =.75 \textwidth]{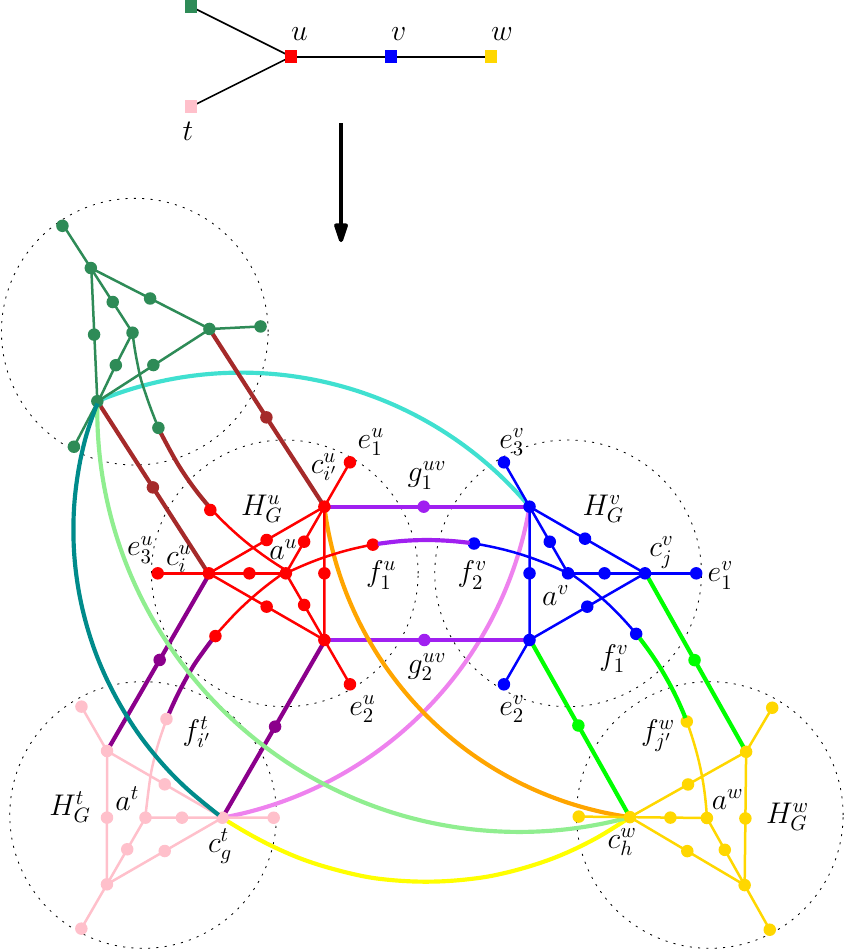}
    \caption{A sample of the original graph $G$ with its vertices (shown as squares), along with the constructed graph $H_G$. Here, vertex $u$ is of degree~3, while the other vertices are of degree~2. The dashed circles contain the vertex-gadgets. Connections between vertex-gadgets in $H_G$ are highlighted with thicker lines.}
    \label{fig:edge_gadget}
\end{figure}

The subgraphs $H_G^u, \forall u \in V(G)$, along with the connections between them constitute the graph $H_G$. Recall that the maximum degree of a vertex in $G$ is 3 and any two degree 3 vertices of $G$ are at least distance 5 apart. Therefore, for any vertex $u \in V(G)$, the maximum number of vertices at distance either exactly 2 or 3 in $G$ is 3. If $u$ is of degree 3 in $G$ then any vertex $c_i^u, i \in \{1,2,3\}$, is connected to four vertices from $V(H_G^u)$, two vertices of type $g_j^{ux}, j \in \{1,2\}, x \in V(G)$, and one vertex each of type $c_h^y$, where $h \in \{1, 3\}$ and $y \in V(G)$ is either at a distance 2 or 3 from $u$. This makes the total degree of $c_i^u$ to be 8. If $v$ is of degree 2 in $G$ then a vertex $c_i^v, i \in \{1,2,3\}$, is connected to at most 4 vertices from $V(H_G^v)$, at most two vertices of type $g_j^{vx}, j \in \{1,2\}, x \in V(G)$, and at most 2 vertex each of type $c_{h'}^{y'}$, where $h' \in \{1,2,3\}$ and $y' \in V(G)$ is either at a distance 2 or 3 from $v$. This makes the total degree of $c_i^v$ to be at most 10. A close inspection of $H_G$ reveals that this upper bound is actually 9 and the maximum degree of $H_G$ is determined by some $c_i^v$ and hence is also 9. We prove the following result.

\paragraph{Intuition of the reduction.} Before delving into the technical proof of validity of the reduction in the next lemma, let us first provide some intuition behind the design of the vertex-gadgets. We begin by focusing on the three pendant vertices in each vertex-gadget, which must belong to any MEG-set by Lemma~\ref{leaf-node}. Any two pendant vertices of the same vertex-gadget, say $e^u_i$ and $e^u_j$, will monitor all the edges along the unique shortest path between them. As a result, all the boundary edges of a vertex-gadget are effectively monitored. Moreover, these pendant vertices also monitor the edges used to connect two adjacent vertex-gadgets e.g. $H^u_G$ and $H^v_G$, specifically those involving the vertices $g^{uv}_1$ and $g^{uv}_2$. (Refer to Figure~\ref{fig:mini_edge_gadget} or Figure~\ref{fig:edge_gadget}.)

Now, consider including the central vertex $a^u$ in the MEG-set. This vertex monitors all paths from $a^u$ to each of the pendant vertices $e^u_i$ of the same vertex-gadget, thereby covering the internal edges of the gadget. Moreover, it also helps monitoring the edges involving the vertices of type $f^u_i$, and the internal edges of adjacent vertex-gadgets, such as the edges of the path of length~3 from $a^v$ to $e_1^v$ in Figure~\ref{fig:mini_edge_gadget} and Figure~\ref{fig:edge_gadget}.

On the other hand, in order to monitor the edges of type $f_i^uf_j^v$, one can show that one necessarily needs to include a vertex from one of the vertex-gadgets $H^u_G$ and $H^v_G$ in the MEG-set. 
This design mirrors the behavior of a vertex cover in $G$, where each edge must have at least one of its endpoints included in the cover.

\begin{lemma} \label{lem:meg}
    $G$ has a vertex cover of size $k$ if and only if $H_G$ has a MEG-set of size $3n + k$.
\end{lemma}

\begin{proof}
    \textbf{(The if part.)} Let $S \subseteq V(H_G)$ be an MEG-set of $H_G$ of size $3n + k$ and let $S' \subset S$ be the set which remains after removing all degree 1 vertices from $S$. Since $H_G$ has $3n$ degree 1 vertices, all of which have to be included in every MEG-set, the size of $S'$ is $k$. Let $C \subseteq V(G)$ be the set of all vertices $v \in V(G)$ for which $V(H_G^v)$ has at least one element in $S$. It is easy to see that the size of $C$ is at most $k$. We show that it is also a vertex cover of $G$.

    Let us assume for the sake of contradiction that $C$ is not a vertex cover of $G$. Then, there exists an edge $uv \in E(G), u, v \in V(G)$, such that neither $u$ nor $v$ belong to $C$. Let us assume $u$ to be a degree 3 vertex of $G$ and let $H_G^u$ and $H_G^v$ be connected as shown in Figure \ref{fig:edge_gadget}. The other case, when $u$ is of degree 2, is similar. Now, consider the edge $f_1^u f_2^v \in E(H_G)$. Since $S$ is still an MEG-set, there exists a pair of vertices $x, y \in S$ which monitors the edge $f_1^u f_2^v$ i.e. every shortest path from $x$ to $y$ passes through $f_1^u f_2^v$. Let $\pi_{xy}$ be one such path. Without loss of generality we assume that $f_1^u$ is encountered before $f_2^v$ while traversing $\pi_{xy}$ from $x$ to $y$.

    Let $V'(H_G^v) = V(H_G^v) \setminus \{e_1^v, e_2^v, e_3^v\}$. Let $x'$ (resp. $y'$) be the first vertex from $V'(H_G^u)$ (resp. $V'(H_G^v)$) encountered when traversing $\pi_{xy}$ starting at $x$ (resp. $y$). Since both $u$ and $v$ do not belong to $C$, from our contruction of $C$, no element of $S$ belongs to $V'(H_G^u) \cup V'(H_G^v)$. This implies that neither $x$ nor $y$ belong to $V'(H_G^u) \cup V'(H_G^v)$. Since $x$ does not belong to $V'(H_G^u)$, $x'$ is either $c_i^u, i \in \{1,2,3\}$, or $f_{i'}^u, i' \in \{2, 3\}$, and similarly since $y$ does not belong to $V'(H_G^v)$, $y'$ is either $c_j^v, j \in \{1,2,3\}$, or $f_1^v$. We have the following cases.
   
    \smallskip \noindent \textbf{Case 1 ($x'$ is $c_i^u$ and $y'$ is $c_j^v$).} Note that in this case $i, j \in \{1,2,3\}$. It is easy to see that any shortest path between $c_i^u$ and $c_j^v$ passes through either the vertex $g_1^{uv}$ or $g_2^{uv}$ and does not include the edge $f_1^v f_2^v$. This means that $x$ and $y$ cannot monitor the edge $f_1^u f_2^v$, which implies a contradiction.
    
    \smallskip \noindent \textbf{Case 2 ($x'$ is $c_i^u$ and $y'$ is $f_1^v$).} Note that in this case $i \in \{1,2,3\}$. Let $f_{j'}^w, j' \in \{1,2\}$ and $w \in V(G)$, be the vertex encountered just before $f_1^v$ when traversing $\pi_{xy}$ starting at $y$. Since the distance between $u$ and $w$ in $G$ is 2, by construction there exists an edge $c_{i'}^u c_h^w, i' = 1$ and $h \in \{1, 3\}$, in $G$. Therefore, the length of the shortest path between $c_i^u$ and $f_{j'}^w$ is at most 6 and passes through the edge $c_{i'}^u c_h^w$ and not $f_1^u f_2^v$. This means that $x$ and $y$ cannot monitor the edge $f_1^u f_2^v$, which is a contradiction. The other case when $x'$ is some $f_{g}^u, g \in \{2, 3\}$, and $y'$ some $c_{h}^v, h \in \{1,2,3\}$, can be similarly disproved.

    \smallskip \noindent \textbf{Case 3 ($x'$ is $f_i^u$ and $y'$ is $f_1^v$).} Note that in this case $i \in \{2, 3\}$. Let $f_{i'}^t, i' \in \{1,2\}$ and $t \in V(G)$, be the vertex encountered just before $f_i^u$ when traversing $\pi_{xy}$ starting at $x$. Similarly, let $f_{j'}^w, j' \in \{1,2\}$ and $w \in V(G)$, be the vertex encountered just before $f_1^v$ when traversing $\pi_{xy}$ starting at $y$. Since $u$ is of degree 3, both $t$ and $w$ are degree 2 vertices of $G$. Since the distance between $t$ and $w$ in $G$ is 3, by construction there exists an edge $c_g^t c_h^w, g, h \in \{1, 3\}$, in $H_G$. Therefore, there are at least two shortest path (of length 7) between $f_{i'}^t$ and $f_{j'}^w$, one which includes the edge $f_1^u f_2^v$ and the other which includes the edge $c_{g'}^t c_h^w$. This means that $x$ and $y$ cannot monitor the edge $f_1^u f_2^v$, a contradiction.

    \smallskip \noindent \textbf{(The only if part.)} Let $C \subseteq V(G)$ be a vertex cover of $G$ of size $k$. We construct the set $S \subset V(H_G)$ as follows. For each vertex $v \in V(G)$, insert into $S$ the vertices $e_1^v, e_2^v$ and $e_3^v$ of $H_G$. If $v \in C$ then we also add the vertex $a^v \in V(H_G^v)$ to $S$. Observe that the size of $S$ is $3n +k$. We show that it is also an MEG-set of $H_G$.

    For every vertex $u \in V(G)$, the vertices $e_1^u, e_2^u, e_3^u \in S$ monitor all the edges in the cycle $c_1^u, d_1^u, c_2^u, d_2^u, c_3^u$, $d_3^u$ of $H_G^u$, as well as the edges $c_i^v e_i^v, \forall i \in \{1,2,3\}$. This is because for each pair of vertices from $e_1^v, e_2^v$ and $e_3^v$, the shortest path between them is unique and hence all edges on the shortest path are monitored. For example, for the vertex pair $e_1^v$ and $e_2^v$, the unique shortest path is $e_1^v, c_1^v, d_1^v, c_2^v, e_2^v$ and all its edges are monitored by the vertex pair. Here we assume $u$ to be a degree 3 vertex of $G$; the other case, when $u$ is degree 2, is similar.
    
    Next, with respect to an edge $uv \in E(G)$, let the subgraphs $H_G^u$ and $H_G^v$ be connected as shown in Figure \ref{fig:edge_gadget}. Again, the shortest path between $e_1^u$ and $e_3^v$ (which passes through the vertex $g_1^{uv}$), and between $e_2^u$ and $e_2^v$ (which passes through the vertex $g_2^{uv}$) are unique. Therefore, all edges of $H_G$ incident on $g_1^{uv}$ and $g_2^{uv}$ are monitored.

    If $u \in V(G)$ is part of the vertex cover $C$, then $a^u \in S$ and $a^u, b_i^u, c_i^u, e_i^u$ is the unique shortest path between $a^u$ and $e_i^u$, $\forall i \in \{1,2,3\}$. Therefore, all edges on these paths are monitored.

    If $u \in V(G)$ is not part of the vertex cover $C$, then all three neighbors of $u$ in $G$ belong to $C$. Let $v \in V(G)$ be one such neighbor. Then $a^v \in V(H_G^v)$ is part of $S$. Observe that the only shortest path between $e_3^u$ and $a^v$ is the path $e_3^u, c_3^u, b_3^u, a^u, f_1^u, f_2^v, a^v$ of length 6. Therefore, all edges on this path are monitored. Any other path between $e_3^u$ and $a^v$ passes through either $c_1^u$ or $c_2^u$ and is of length at least 7. This is ensured in the construction phase by restricting $c_3^u$ from being directly connected to any $c_{i'}^w$, $i' \in \{1, 3\}$. Similarly, we can show that edges $a^u b_i^u$ and $b_i^u c_i^u, \forall i \in \{1,2\}$, are also monitored.
    
    Now consider all edges $c_i^t c_j^w, i, j \in \{1,2,3\}$ and $t, w \in V(G)$, of $H_G$ such that $t$ and $w$ are at distance 2 or 3 in $G$. The shortest path between $e_i^t$ and $e_j^w$ is of length 3, is unique, and passes through the edge $c_i^t c_j^w$, thereby monitoring it. Since we have shown all edges of $H_G$ to be monitored by some vertex pair in $S$, therefore $S$ is an MEG-set of $H_G$.
\end{proof}

Since \textsc{Monitoring Edge Geodetic Set} is clearly in NP~\cite{H23} and $H_G$ has maximum degree 9, Lemma~\ref{lem:vertex_cover} and Lemma~\ref{lem:meg} imply the following theorem.

\begin{theorem}
    \textsc{Monitoring Edge Geodetic Set} is NP-complete, even for graphs of maximum degree~9.
\end{theorem}

\section{Conclusion}\label{sec:conclu}

Inspired by a network monitoring application, we have defined the new concept of MEG-sets of a graph, which is a common refinement of the popular concept of a geodetic set and its variants, and of the previously studied distance-edge-monitoring sets.

We have studied the concept on basic graph classes. It is interesting to note that there are many graph classes which require the entire vertex set in any MEG-set: complete graphs, complete multipartite graphs, and hypercubes. More examples are provided in~\cite{H23}. It would be an interesting question to characterize all such graphs, if they can be described in a meaningful way.

Our upper bound using the feedback edge set number is probably not tight. What is a tight bound on this regard?

Finally, it remains to investigate further computational aspects of the problem. Clearly, \textsc{Monitoring Edge Geodetic Set} is polynomial-time solvable on the graph classes studied in Section~\ref{sec:classes}, such as trees, unicyclic graphs, etc. What about graphs of bounded tree-width? Also, \textsc{Vertex Cover} remains NP-hard on planar cubic graphs~\cite{mohar2001face}. However, our NP-hardness reduction does not preserve planarity. Is \textsc{Monitoring Edge Geodetic Set} NP-complete for planar graphs? For subcubic graphs? What about other standard graph classes like interval graphs? Also, the approximation complexity and the parameterized complexity of the problem could be investigated. Regarding parameterized complexity, parameters of interest are the solution size or structural parameters, like the feedback edge set number.

\subsubsection*{Acknowledgements} Sanjana Dey was partially supported by the Fonds de la Recherche Scientifique – FNRS under Grant n° T.0188.23 (PDR ControlleRS). Florent Foucaud was partially supported by the ANR project GRALMECO (ANR-21-CE48-0004), the French government IDEX-ISITE initiative 16-IDEX-0001 (CAP 20-25), the International Research Center ``Innovation Transportation and Production Systems'' of the I-SITE CAP 20-25, and the CNRS IRL ReLaX. Florent Foucaud thanks Ralf Klasing and Tomasz Radzik for initial discussions in Bordeaux in 2019, which inspired the present study. We thank Davide Bil\`o for pointing out a mistake in the initial statement of Corollary~\ref{cor:MLN}, and the anonymous referees for their careful reading and helpful comments.


\begin{thebibliography}{18}

\bibitem{alekseev1982effect}
V. E. Alekseev. The effect of local constraints on the complexity of determination of the graph independence number. \emph{Combinatorial-algebraic methods in applied mathematics}, 3-13, Gorkiy University Press, Gorky, 1982 (in Russian).

\bibitem{r2}
E. Bampas, D. Bil\`o, G. Drovandi, L. Gual\`a, R. Klasing and G. Proietti. Network verification via routing table queries. \emph{Journal of Computer and System Sciences} 81(1):234–248, 2015.

\bibitem{BEEHHMR06} Z. Beerliova, F. Eberhard, T. Erlebach, A. Hall, M. Hoffmann, M. Mihal\'ak and L. S. Ram. Network discovery and verification. \emph{IEEE Journal on Selected Areas in Communications} 24(12):2168--2181, 2006.

\bibitem{BR06} Y. Bejerano and R. Rastogi. Robust monitoring of link delays and faults in IP networks. \emph{IEEE/ACM Transactions on Networking} 14(5):1092--1103, 2006.

\bibitem{cox} H. S. M. Coxeter. Self-dual configurations and regular graphs. \emph{Bulletin of the American Mathematical Society} 56(5): 413--455, 1950. 

\bibitem{E15} D. Eppstein. Metric dimension parameterized by max leaf number. \emph{Journal of Graph Algorithms \& Applications} 19(1):313--323, 2015.

\bibitem{r11}
L. Epstein, A. Levin and G. J. Woeginger. The (weighted) metric dimension of graphs: hard and easy cases. \emph{Algorithmica} 72(4), 1130-1171, 2015.




\bibitem{MLN} M. R. Fellows, D. Lokshtanov, N. Misra, M. Mnich, F. Rosamond, and
S. Saurabh. The complexity ecology of parameters: an illustration using
bounded max leaf number. \emph{Theory of Computing Systems} 45(4):822--848, 2009.



\bibitem{r1}
F. Foucaud, R. Klasing, M. Miller and J. Ryan. Monitoring the edges of a graph using distances. Proceedings of the 6th International Conference on Algorithms and Discrete Applied Mathematics (CALDAM 2020), \emph{Lecture Notes in Computer Science} 12016:28-40, 2020.

\bibitem{r1j}
F. Foucaud, S. Kao, R. Klasing, M. Miller and J. Ryan. Monitoring the edges of a graph using distances. \emph{Discrete Applied Mathematics} 319:424--438, 2022.

\bibitem{CALDAM} F. Foucaud, K. Narayanan and L. Ramasubramony Sulochana. Monitoring edge-geodetic sets in graphs. Proceedings of the 9th International Conference on Algorithms and Discrete Applied Mathematics (CALDAM 2023), \emph{Lecture Notes in Computer Science} 13947:245-256, 2023.


\bibitem{r4} F. Harary, E. Loukakis, and C. Tsouros. The geodetic number of a graph. \emph{Mathematical and Computational Modelling} 17:89–95, 1993.

\bibitem{b1} F. Harary. \emph{Graph Theory}. Reading, MA: Addison-Wesley, 1994.

\bibitem{H23} J. Haslegrave. Monitoring edge-geodetic sets: hardness and graph products. \emph{Discrete Applied Mathematics} 340:79--84, 2023.

\bibitem{r12} L. Kellerhals and T. Koana. Parameterized complexity of geodetic set. Proceedings of the 15th International Symposium on Parameterized and Exact Computation (IPEC 2020), \emph{Leibniz International Proceedings in Informatics (LIPIcs)} 180, 20:1--20:14, 2020.

\bibitem{SEGS} P. Manuel, S. Klav\v{z}ar, A. Xavier, A. Arokiaraj and E. Thomas. Strong edge geodetic problem in networks. \emph{Open Mathematics} 15(1):1225--1235, 2017.


\bibitem{mohar2001face}
B. Mohar. Face covers and the genus problem for apex graphs. \emph{Journal of Combinatorial Theory, Series B}, 82(1), 102-117, 2001.

\bibitem{r8}
Y. Saad and M. Schultz. Topological properties of hypercubes. \emph{IEEE Transactions on Computers}, 37(7), 867–872, 1988.

\bibitem{r5}
A. P Santhakumaran, and J. John. Edge geodetic number of a graph. \emph{Journal of Discrete Mathematical Sciences and Cryptography} 10:415–432, 2007.



\bibitem{b2}
S. Skiena. \emph{Implementing Discrete Mathematics: Combinatorics and Graph Theory with Mathematica.} Reading, MA: Addison-Wesley, 1990.


\end{thebibliography}
\end{document}